\documentclass[11pt,letterpaper]{article}

\usepackage[T1]{fontenc}
\usepackage[utf8]{inputenc}
\usepackage[letterpaper,margin=1in]{geometry}

\usepackage{amsmath}
\usepackage{amssymb}
\usepackage{amsthm}
\usepackage{mathtools}
\usepackage{microtype}

\usepackage{authblk}

\usepackage{natbib}
\renewcommand{\citet}[1]{\cite{#1}}
\renewcommand{\citep}[1]{\cite{#1}}
\usepackage[left,mathlines]{lineno}

\usepackage[colorlinks=true,citecolor=blue,linkcolor=blue,urlcolor=blue]{hyperref}
\usepackage[capitalize,noabbrev]{cleveref}
\newcommand{\Pb}{\bbP}
\newcommand{\dif}{\mathrm{d}}

\newcommand{\borel}{\mathfrak{B}}

\newcommand{\lrb}[1]{\left(#1\right)}
\newcommand{\lsb}[1]{\left[#1\right]}
\newcommand{\lcb}[1]{\left\{#1\right\}}
\newcommand{\labs}[1]{\left\lvert#1\right\rvert}
\newcommand{\norm}[1]{\lVert #1\rVert}

\DeclareMathOperator*{\argmin}{argmin}
\DeclareMathOperator{\law}{\mathsf{Law}}

\newcommand{\bbE}{\mathbb{E}}
\newcommand{\bbI}{\mathbb{I}}
\newcommand{\bbN}{\mathbb{N}}
\newcommand{\bbP}{\mathbb{P}}
\newcommand{\bbR}{\mathbb{R}}

\newcommand{\cA}{\mathcal{A}}
\newcommand{\cF}{\mathcal{F}}
\newcommand{\cI}{\mathcal{I}}

\newcommand{\cR}{\mathcal{R}}
\newcommand{\cS}{\mathcal{S}}
\newcommand{\cX}{\mathcal{X}}

\theoremstyle{plain}
\newtheorem{theorem}{Theorem}
\newtheorem{lemma}[theorem]{Lemma}
\newtheorem{proposition}[theorem]{Proposition}
\newtheorem{corollary}[theorem]{Corollary}
\newtheorem{claim}[theorem]{Claim}

\theoremstyle{definition}
\newtheorem{assumption}{Assumption}
\newtheorem{definition}[theorem]{Definition}

\theoremstyle{remark}

\title{New Bounds for the Last Iterate of the\\Stochastic subGradient Method}
\author[1]{Guglielmo Beretta}
\author[2]{Tommaso Cesari}
\author[3]{Roberto Colomboni}
\author[1]{Andrea Paudice}

\affil[1]{Department of Computer Science, Aarhus University, Aarhus, Denmark}
\affil[2]{School of Electrical Engineering and Computer Science, University of Ottawa, Ottawa, Canada}
\affil[3]{School of Mathematics, University of Bristol, Bristol, United Kingdom}

\affil[ ]{
\texttt{gberetta@cs.au.dk},
\texttt{tcesari@uottawa.ca},
\texttt{roberto.colomboni@bristol.ac.uk},
\texttt{apaudice@cs.au.dk}
}
\date{}

\begin{document}

\maketitle
%
\begin{abstract}
\noindent We study the last iterate of the stochastic subgradient method for one-dimensional convex Lipschitz objectives. For a fixed horizon $n$, we consider the standard fixed stepsizes $\eta =\Theta(1/\sqrt n)$. We prove that, for such stepsize policies, under additive i.i.d.\ subgradient noise with uniformly bounded variance, the last iterate features an optimization error of order $1/\sqrt n$, thereby removing the extra $(\log n)$ factor present in existing generic bounds. On the other hand, we show that without the i.i.d.\ assumption, the optimization error can be of order $(\log n)/\sqrt n$. Thus, under the uniformly bounded variance assumption alone, the last iterate of SsGM is suboptimal even in dimension one, resolving negatively an open problem posed in \cite{koren-segal2020}.
\end{abstract}
\section{Introduction}
The (projected) \emph{Stochastic subGradient Method} (SsGM) \citet{Ermoliev1969} is arguably one of the most popular algorithms in constrained stochastic optimization. Its success is largely due to its efficiency and provable performance. Classical analyses for convex Lipschitz objectives, which are also the focus of this work, typically control the expected optimization error of the \emph{average iterate} \cite{Nemirovski1983,Nemirovski2009,Liu2023b,Bach2024}. In the finite-horizon setting, where the number of iterations $n$ is known in advance, one may use a \emph{standard fixed stepsize} $(\eta_i)_{1\leq i\leq n}\equiv \eta$ with $\eta=\Theta(1/\sqrt n)$, and return the average iterate\footnote{Throughout the paper, this notation has its usual finite-horizon meaning: there are constants $0<\underline c\le \overline c<\infty$ and $n_0\in\bbN$ such that, for every $n\ge n_0$, all iterations use the same stepsize $\eta=c_n/\sqrt n$ with $c_n\in[\underline c,\overline c]$. Constants hidden in asymptotic notation may depend on $\underline c$ and $\overline c$, but not on $n$.}. Assuming uniformly bounded variance of the stochastic subgradients and finite diameter of the feasible set, averaging yields optimization error of order $1/\sqrt n$, which is optimal for this class of problems \cite{Nemirovski1983,Agarwal2012}.

\noindent More recently, attention has shifted to the last iterate of the stochastic subgradient method \cite{Shamir2013,Harvey2019a,Jain2021,Liu2024,Eldowa2024}. This is the iterate naturally returned in practice, due to its simplicity and often superior empirical performance. However, last-iterate guarantees are substantially more delicate. The best known upper bound for the standard fixed-stepsize choice $\eta=\Theta(1/\sqrt n)$ is of order $(\log n)/\sqrt n$, leaving an extra logarithmic factor compared with the optimal rate achieved by averaging \cite{Shamir2013}. This logarithmic loss is known to be unavoidable even in deterministic setting if the dimension is allowed to scale with the horizon, for instance when $d \approx n$ \cite{zamani-glineur2025}. However, that does not settle the problem in fixed dimension. In fact, this is precisely the question asked in Open Problem~1 of \cite{koren-segal2020}.

\paragraph{Contributions.} In this work, we answer this question negatively. More precisely, for every choice of constants $0<\underline c\le\overline c<\infty$, for every sufficiently large horizon $n$, and for every fixed stepsize $\eta=c_n/\sqrt n$ with $c_n\in[\underline c,\overline c]$, we construct a one-dimensional stochastic convex optimization instance on which the last iterate suffers expected optimization error of order $(\log n)/\sqrt n$ (\Cref{thm:clock-counterexamples}). Thus, averaging is not just a proof device: without additional structure, the last iterate is genuinely suboptimal even in dimension one. On the positive side, still for $d=1$, we show that if the additive subgradient noise is both state-independent and time-homogeneous, then every standard fixed-stepsize family $\eta=\Theta(1/\sqrt n)$ yields the optimal last-iterate order $1/\sqrt n$, with an explicit constant depending only on the problem parameters and on the hidden constants in the $\Theta$ notation (\Cref{thm:bound_fh_simpler}).

\section{Related work}

\paragraph{Existing last-iterate bounds.}
Besides the standard constant stepsize used in the finite-horizon setting, several alternative stepsize policies have been proposed to improve the behavior of the last iterate \cite{Jain2021,Liu2024}. \cite{Jain2021} were the first to introduce a non-standard finite-horizon policy under which the last iterate recovers the optimal $1/\sqrt n$ rate. \cite{Liu2024} later proposed a different finite-horizon policy with the same optimal guarantee. In the anytime setting, where the horizon is not fixed in advance, the standard choice is a decreasing stepsize. \cite{Shamir2013} showed that, for $\eta_i=\eta/\sqrt i$, the last iterate has optimization error of order at most $(\log n)/\sqrt n$, and \cite{Harvey2019a,Harvey2024} showed that this logarithmic factor is unavoidable even in the deterministic setting. Improved anytime guarantees can be obtained with more refined stepsize policies (e.g., see \cite{Parletta2025}), but an extra $\operatorname{polylog}(n)$ factor is necessary in infinite-dimensional, or sufficiently high-dimensional, settings \cite{Kornowski2026}.

\paragraph{Fixed-dimensional setting.}
The fixed-dimensional case is much less understood. \cite{koren-segal2020} supported the possibility of removing the logarithmic factor by showing that, in dimension $d=1$, for the absolute-value objective and a specific stochastic oracle model, the algorithm dynamics reduce to a biased one-sided random walk and the optimal $1/\sqrt n$ rate is attained. \cite{liu-lu2021} extended this positive result to a class of one-dimensional piecewise-linear convex functions under sub-Gaussian noise. They also complemented their upper bounds with a lower bound of order $(\log d)/\sqrt n$, which, however, does not rule out the optimal rate when $d$ is fixed. In comparison, our positive result applies to arbitrary one-dimensional convex Lipschitz objectives, under a state-independent time-homogeneous additive noise model with finite second moment, and it is uniform over the stepsize family $\eta=\Theta(1/\sqrt n)$. On the other hand, our lower bound shows that, under the sole uniformly bounded variance assumption, the logarithmic factor is unavoidable even in $d=1$ for the same stepsize family.

\section{Problem Setting}\label{sec:problem_setting}
\paragraph{Notation.} Throughout, $k$ and $n$ will always denote natural numbers. For a fixed fuction $f$ and $x \in \cX$, let us define $\Delta(x) \coloneq f(x)-f_{\star}$ and $\operatorname{dist}(x, \cX_{\star}) \coloneq \inf_{z \in \cX_{\star}} |x-z|$. Given a closed convex set $\cS \subseteq \bbR$, we write $\Pi_\cS(x) \coloneq \argmin_{z \in \cS} \labs{x -z}$ for the orthogonal projection of $x \in \bbR$ onto $\cS$.
We introduce an abuse of notation for closed intervals: given $-\infty \le a \le b \le +\infty$ such that $(a,b)\notin\{(-\infty,-\infty),(+\infty,+\infty)\}$, we write
\begin{equation*}
[a, b] \coloneq \{ x \in \bbR : a \le x \le b\},
\end{equation*}
which is a subset of $\bbR$ even if $a$ and/or $b$ are infinite.
With this notation, if $\cS \coloneq [a, b]$ then $\Pi_\cS(x) = \max\{ a, \min\{x, b \}\}$ for every $x \in \bbR$.
Given a convex $\cS \subseteq \bbR$, a point $x \in \cS$ and a convex function $h \colon \cS \to \bbR$, we call $g\in \bbR$ a \emph{relative subgradient of $h$ at $x$ with respect to $\cS$} and write $g \in \partial_{\cS} h(x)$ if
$h(y) \ge h(x) + g \cdot (y - x)$ for every $y \in \cS$.
For a convex function $h$ on an interval, we write $D_{-}h$ and $D_{+}h$ for its left and right derivatives, whenever they are defined. We use the standard fact that, if $g\in\partial_{\cS}h(x)$, then $D_{-}h(x)\le g$ whenever $D_{-}h(x)$ exists, and $g\le D_{+}h(x)$ whenever $D_{+}h(x)$ exists. For all $x\in\bbR$, we denote by $\delta_x$ the Dirac probability measure at $x$.

\paragraph{Problem Formulation.} We consider the following optimization problem:
\begin{equation*}
\underset{x \in \cX}{\text{minimize}} \quad f(x)
\end{equation*}
under the following assumptions. 
\begin{assumption}[Feasibility Domain]
\label{ass:feasibility}
$\cX \subseteq \bbR$ is closed and convex, and $\cX_{\star} \coloneq \arg \min_{x \in \cX} f(x) \neq \emptyset$. Moreover, $f_{\star} \coloneq \min_{x \in \cX} f(x) \in \bbR$.
\end{assumption}
\begin{assumption}[Convexity and Lipschitzness]
\label{ass:convexity_lipschitzness}
$f\colon \cX \rightarrow \bbR$ is convex and $L$-Lipschitz continuous on $\cX$. 
\end{assumption}
\noindent 
Assumption~\ref{ass:convexity_lipschitzness} implies that all secant slopes of $f$ over $\cX$ belong to $[-L,L]$, and consequently,%
\footnote{
We observe that $\partial_{\cX}f(x)$ is an unbounded interval at every $x$ that is a boundary point of $\cX$, and this motivates the explicit restriction $\labs{G_k} \le L$ given in Definition~\ref{def:sgd}.}
that $\partial_{\cX}f(x)\cap[-L,L]\neq\emptyset$ for every $x \in \cX$.
Under Assumptions~\ref{ass:feasibility}--\ref{ass:convexity_lipschitzness}, 
we will consider the following formulation of SsGM.
For every trajectory, define the history
\begin{equation*}
    \cF_k
\coloneq
    \sigma(X_1,W_1,\dots,W_{k-1}),
\qquad
    k\ge1,
\end{equation*}
with the convention that $\cF_1\coloneq\sigma(X_1)$.
\begin{definition}[Projected Stochastic (sub)Gradient Method]\label{def:sgd}
Given an $\cX$-valued random variable $X_1$, a sequence of random variables $(W_k)_{k \in \bbN}$, a sequence of random variables $(G_k)_{k \in \bbN}$, where $G_k \in \partial_{\cX} f(X_k) \cap [-L,L]$ is $\cF_k$-measurable, and given $\eta >0 $, the stochastic (sub)gradient descent defines the iterates
\begin{equation}\label{eq:sgd}
\forall k \geq 1, \quad X_{k+1} \coloneq \Pi_{\cX}(X_k - \eta \cdot (G_k + W_k)),
\end{equation}
with $\Pi_\cX(x) \coloneq \argmin_{z \in \cX} \labs{x -z}$ being the orthogonal projection of $x \in \bbR$ onto $\cX$.
\end{definition}

\noindent The term $W_k$ can be regarded as an additive noise term that alters the value of $G_k$ when updating $X_k$.
The following condition is the additive-noise analogue of the conditional-unbiasedness and conditional-second-moment oracle condition described in \citep{koren-segal2020}.
The positive result strengthens this moment condition to a kernel-oracle specification and imposes both structural symmetries below. The lower bounds show that, even within this structured kernel class, imposing only one of the two symmetries is not enough to remove the logarithm.
\begin{assumption}[Variance Condition]\label{ass:centered_additive_l2_noise}
There exists $\sigma > 0$ such that
\begin{equation*}\label{eq:centered-l2-noise}
\forall k \ge 1, \qquad \bbE[W_k\mid \cF_k]=0,
\qquad
\bbE[W_k^2\mid \cF_k]\le \sigma^2.
\end{equation*}
\end{assumption}
\noindent To separate the two structural properties that appear in our positive result, we strengthen the above into the following kernel-oracle formulation. %
\begin{assumption}[Centered additive kernel oracle]
\label{ass:additive-kernel-oracle}
There exists $\sigma>0$ and a specified family of probability kernels
\[
Q_k\colon\cX\times\borel(\bbR)\to[0,1],
\qquad k\ge1,
\]
such that, for every $k\ge1$, every $x\in\cX$, and every $A\in\borel(\bbR)$,
\begin{equation*}
\int_{\bbR} w\,Q_k(x,\dif w)=0,
\qquad
\int_{\bbR} w^2\,Q_k(x,\dif w)\le \sigma^2,
\end{equation*}
and
\begin{equation*}
\Pb(W_k\in A\mid\cF_k)=Q_k(X_k,A)
\qquad\text{a.s.}
\end{equation*}
\end{assumption}
\noindent The family of kernels is part of the oracle specification, not just an a posteriori representation along the realized trajectory: $Q_k(x,\cdot)$ is the noise law that the oracle would use at time $k$ if the queried state were $x$. We can show that
Assumption~\ref{ass:additive-kernel-oracle} implies Assumption~\ref{ass:centered_additive_l2_noise}: indeed, since $X_k$ is $\cF_k$-measurable,
\[
\bbE[W_k\mid \cF_k]
=
\int_{\bbR} w\,Q_k(X_k,\dif w)
=
0,
\]
and
\[
\bbE[W_k^2\mid \cF_k]
=
\int_{\bbR} w^2\,Q_k(X_k,\dif w)
\le
\sigma^2.
\]
Thus Assumption~\ref{ass:additive-kernel-oracle} is not a weaker moment condition, but a more structured oracle specification: it describes the conditional law that would be used at every time and every queried state.
\begin{assumption}[Time-homogeneous additive kernel]
\label{ass:time-homogeneous-noise}
The kernels in Assumption~\ref{ass:additive-kernel-oracle} do not depend on time: there exists a probability kernel $Q\colon\cX\times\borel(\bbR)\to[0,1]$ such that
\[
Q_k(x,\cdot)=Q(x,\cdot)
\qquad \forall k\ge1,\ \forall x\in\cX.
\]
\end{assumption}
\begin{assumption}[State-independent additive kernel]
\label{ass:state-independent-noise}
The kernels in Assumption~\ref{ass:additive-kernel-oracle} do not depend on the queried state: for every $k\ge1$ there exists a probability measure $\mu_k$ on $\bbR$ such that
\[
Q_k(x,\cdot)=\mu_k(\cdot)
\qquad \forall x\in\cX.
\]
\end{assumption}
\noindent If Assumptions~\ref{ass:additive-kernel-oracle}, \ref{ass:time-homogeneous-noise}, and \ref{ass:state-independent-noise} all hold, then there exists a single centered law $\mu$ with second moment at most $\sigma^2$ such that
\[
Q_k(x,\cdot)=\mu(\cdot)
\qquad \forall k\ge1,\ \forall x\in\cX.
\]
Consequently, for every predictable SsGM trajectory, the realized noise sequence $(W_k)_{k \in \bbN}$ is i.i.d.\ with law $\mu$. Indeed, $\Pb(W_k\in A\mid\cF_k)=\mu(A)$ for every $k$ and every Borel set $A$, and iterating this identity factorizes all finite-dimensional distributions.
This constant-kernel case is the setting of the positive theorem. The lower bounds below show that neither symmetry is dispensable within the kernel-oracle class: there are logarithmic counterexamples satisfying Assumption~\ref{ass:additive-kernel-oracle} together with Assumption~\ref{ass:time-homogeneous-noise}, and also logarithmic counterexamples satisfying Assumption~\ref{ass:additive-kernel-oracle} together with Assumption~\ref{ass:state-independent-noise}. Since Assumption~\ref{ass:additive-kernel-oracle} implies Assumption~\ref{ass:centered_additive_l2_noise}, both counterexample classes also satisfy the base second-moment condition.
\paragraph{Random-field interpretation.}
Equivalently, one may imagine that at each round $k$ the oracle samples a fresh random field $(W_{x,k})_{x\in\cX}$ and returns $W_{X_k,k}$. Time-homogeneity corresponds to these fresh fields being sampled from the same law at every time, while state-independence corresponds to all states having the same marginal noise law. The kernel formulation above is used to avoid measurability issues with random fields indexed by a continuum.
\section{Results}

\noindent 
The following theorem gives the positive side of the picture. We state both the pointwise fixed-stepsize estimate and its uniform consequence for standard fixed-stepsize families $\eta=\Theta(1/\sqrt n)$.
\begin{theorem}\label{thm:bound_fh_simpler}
Suppose Assumptions~\ref{ass:feasibility}, \ref{ass:convexity_lipschitzness}, \ref{ass:additive-kernel-oracle}, \ref{ass:time-homogeneous-noise}, and \ref{ass:state-independent-noise} hold, and let $X_1 \equiv x_1$ for some $x_1 \in \cX$. For every $n \geq 2$ and every fixed stepsize $\eta>0$, the iterates generated by \eqref{eq:sgd} satisfy
\begin{align}\label{eq:sgd-bound-eta}
    \bbE[\Delta(X_n)]
&\leq
    \frac{\operatorname{dist}(x_1,\cX_{\star})^2}{2\eta n}
    +
    \left(
    (L + \sigma)^2 + \frac{L^2 + \sigma^2}{2}
    \right)\eta.
\end{align}
In particular, if $\eta = c_n/\sqrt{n}$, with $0<\underline c\le c_n \le \overline c<\infty$, then
\begin{align}\label{eq:sgd-bound-theta}
    \bbE[\Delta(X_n)]
&\leq
    \left[ \frac{\operatorname{dist}(x_1,\cX_{\star})^2}{2 \underline c}
    +
    \left(
    (L + \sigma)^2 + \frac{L^2 + \sigma^2}{2}
    \right)\overline c
    \right] \cdot \frac{1}{\sqrt{n}}.
\end{align}
\end{theorem}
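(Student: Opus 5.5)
The plan is to combine the classical average-iterate estimate with a one-dimensional coupling argument, which is where the i.i.d.\ structure of the noise is used. The average-iterate part is standard. Expanding $|X_{k+1}-u|^2$ with nonexpansiveness of $\Pi_\cX$ and $u\in\cX_\star$, using $G_k\in\partial_\cX f(X_k)$ and $\bbE[W_k\mid\cF_k]=0$, gives
\begin{equation*}
\bbE|X_{k+1}-u|^2\le \bbE|X_k-u|^2-2\eta\,\bbE[\Delta(X_k)]+\eta^2(L^2+\sigma^2).
\end{equation*}
Summing over $k$ yields $\frac1n\sum_{k\le n}\bbE[\Delta(X_k)]\le \frac{\operatorname{dist}(x_1,\cX_\star)^2}{2\eta n}+\frac{(L^2+\sigma^2)\eta}{2}$, which is exactly the first and last terms of \eqref{eq:sgd-bound-eta}. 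It then remains to show that $\bbE[\Delta(X_n)]$ exceeds an average of earlier $\bbE[\Delta(X_k)]$ by at most $(L+\sigma)^2\eta$. Finally, \eqref{eq:sgd-bound-theta} follows by substituting $\eta=c_n/\sqrt n$ and bounding $1/c_n\le1/\underline c$ and $c_n\le\overline c$.

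For this comparison I will exploit the structure available only in one dimension with a single noise law. The key lemma is a nonexpansiveness-up-to-$2\eta L$ property of the one-step map $T_w(x)=\Pi_\cX(x-\eta(g(x)+w))$. For $x,y\in\cX$ with chosen subgradients $g(x),g(y)\in[-L,L]$, monotonicity of subgradients in $d=1$ gives $(x-y)(g(x)-g(y))\ge0$. Together with $|g(x)-g(y)|\le2L$, this yields
\begin{equation*}
|T_w(x)-T_w(y)|\le \max\{|x-y|,\,2\eta L\}
\end{equation*}
for the \emph{same} noise value $w$, because $w$ cancels in the difference. Iterating, two SsGM trajectories driven by a common noise sequence stay within $\max\{|x-y|,2\eta L\}$ of each other forever.

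Next I would use time-homogeneity and state-independence: $(W_k)$ is i.i.d.\ with law $\mu$ and independent of the past. Hence, for each $k$, the trajectory restarted at time $k$ from a deterministic point, driven by $W_k,\dots,W_{n-1}$, has the same law as $X_{n-k+1}$. I would couple $X_n$ (viewed as the chain restarted from $X_k$) with such restarted chains. I would also exploit the one-step relation $\Delta(X_{k+1})\le\Delta(X_k)+L\eta(|G_k|+|W_k|)$, whose expectation contributes terms of the form $\eta L(L+\sigma)$. These are the source of the $(L+\sigma)^2\eta$ constant. Averaging the resulting inequality $\bbE[\Delta(X_n)]\le\bbE[\Delta(X_{n-k+1})]+(\text{coupling error})$ over $k$ should reproduce the average-iterate bound plus an $O((L+\sigma)^2\eta)$ correction.

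The main obstacle is controlling the coupling error. Naively it is $L\,\bbE|X_k-x_1|$, which is not small. I would first try to choose the restart point cleverly, for instance a point of $\cX_\star$, or the same-side reflection of $X_k$ at the same function level. The aim is that the comparison costs only $L\cdot 2\eta(L+\sigma)$ rather than a distance term. If that fails, I would instead compare $\Delta$ along the two coupled paths directly, using that in $d=1$ the sublevel sets are intervals, so the ordering of coupled chains is preserved up to one overshoot of size $\eta(L+|W|)$. Making this ordering argument precise, so that only an $O(\eta(L+\sigma))$ gap survives, is the step I expect to require the most care.
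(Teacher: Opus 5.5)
\textbf{There is a genuine gap.} Your averaged estimate is exactly the paper's first step, and your contraction lemma is correct: for a common noise value, monotonicity of one-dimensional subgradients gives $|T_w(x)-T_w(y)|\le\max\{|x-y|,2\eta L\}$. However, the entire content of the theorem sits in the step you leave open. That step is showing that $\bbE[\Delta(X_n)]$ exceeds $\bbE[\Delta(X_k)]$ by at most $O((L+\sigma)^2\eta)$ \emph{uniformly in $n-k$}, and the coupling you propose cannot produce it.

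The restarted chain has the law of $X_{n-k+1}$ only if you restart from $x_1$. Since $G_k$ may depend on the whole history, the selection rule must also be shifted in time. The coupling cost is then $L\,\bbE\max\{|X_k-x_1|,2\eta L\}$. This is of the order of the initial distance, or of a random-walk displacement in flat regions, not $O(\eta)$. If you instead restart from a point near $X_k$ or at the same level (a point of $\cX_\star$, a reflection), you lose the identification with an earlier iterate. You are then left needing $\bbE[(f(Y_n)-f(Y_k))_+]=O(\eta)$ for a chain restarted at a good point, which is the original difficulty.

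A pairwise contraction only controls the gap between two trajectories. It says nothing about whether a single trajectory that sits in $\{f\le q\}$ at time $k$ stays, in expectation, within $O(\eta)$ of that level over arbitrarily many further steps. The one-step bound $\Delta(X_{k+1})\le\Delta(X_k)+L\eta(|G_k|+|W_k|)$ accumulates linearly. It is useful only if invoked $O(1)$ times, and nothing in your plan arranges that.

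What is missing is a drift (stability) argument for these excursions. The paper conditions on $\cF_{k_\star}$ for a good index $k_\star$ and sets $q=f(X_{k_\star})$. It enlarges $\{f\le q\}$ by $\eta L$, which absorbs the single overshoot you mention at cost $\eta L^2$. It then dominates the distance of $X_k$ from the enlarged set, on each side, by the monotone chain $Z_{k+1}=\Pi_{\cI}(T(Z_k)-\eta R_k)$ with $T(z)=\sup_{0\le s\le z}(s-\eta m(s))_+$. The supremum restores the monotonicity that the raw map $x\mapsto x-\eta g(x)$ lacks.

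For this chain, $B(z)=z^2/(2\eta)+2Lz$ satisfies $PB(z)-B(z)\le-\int_0^z m(s)\,\dif s+(\sigma^2/2+L\sigma)\eta$. An invariant measure exists because the kernel is Feller on a compact interval; unbounded sides are handled by truncation and Fatou. Integrating the drift inequality against it bounds the stationary mean of $\int_0^Z m$ by $(\sigma^2/2+L\sigma)\eta$.

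Only then does a common-noise coupling enter: the chain started at $0$ is dominated pathwise by its stationary version, which makes the bound uniform in time. So your coupling idea has a place, but as the final step comparing a monotone chain with its own stationary copy. It is not the source of the $O(\eta)$ estimate.
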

\noindent We make first the following comments and defer the proof to \Cref{sec:upper_bound_proof}.
First, \eqref{eq:sgd-bound-theta} is the precise upper-bound statement for the standard fixed-stepsize regime $\eta=\Theta(1/\sqrt n)$: the hidden constant may depend on $\underline c$ and $\overline c$, but not on $n$. The $1/\sqrt{n}$ dependence in \eqref{eq:sgd-bound-theta} is worst-case optimal up to universal constants. Second, the pointwise estimate \eqref{eq:sgd-bound-eta} is slightly more informative: if one sets $\eta=c/\sqrt n$ and optimizes over $c>0$, then, when $\operatorname{dist}(x_1,\cX_{\star})>0$, the optimizer is
\begin{equation*} \label{eq:opt_fh}
    c_{\star}
=
    \frac{\operatorname{dist}(x_1,\cX_{\star})}
    {\sqrt{3L^2+4L\sigma+3\sigma^2}}
\qquad \implies \qquad
    \bbE[\Delta(X_n)]
\le
    \frac{\operatorname{dist}(x_1,\cX_{\star})\sqrt{3L^2+4L\sigma+3\sigma^2}}{\sqrt n}
\end{equation*}
\noindent To prove Theorem~\ref{thm:bound_fh_simpler}, the first observation is that for every fixed stepsize $\eta>0$ there is at least one index $k_{\star}$ with $1 \le k_{\star} \le n$ such that
\begin{align*}
\bbE[\Delta(X_{k_{\star}})] \le
\frac{\operatorname{dist}(x_1,\cX_{\star})^2}{2\eta n} + \frac{L^2 + \sigma^2}{2}\eta.
\end{align*}
For $\eta=c/\sqrt n$, this becomes
\begin{align*}
\bbE[\Delta(X_{k_{\star}})] \le
\left[ \frac{\operatorname{dist}(x_1,\cX_{\star})^2}{2c} + \frac{L^2 + \sigma^2}{2} \cdot c \right] \cdot \frac{1}{\sqrt{n}}.
\end{align*}
This bound is shown via a technique that is standard in the study of the average iterate of SsGM.
We remark that $k_{\star}$ can be defined as the smallest $k$ in 
$\argmin_{1 \le k \le n} \bbE[\Delta(X_{k})]$, and this makes $k_{\star}$ a deterministic constant once $\cX$, $f$, $x_1$, $\eta$, $n$, the noise law, and the predictable subgradient-selection rule are fixed.
\noindent The first contribution of this paper is showing that, in our setting, 
\begin{align}\label{eq:drift}
\bbE[(\Delta(X_k) - \Delta(X_{k_{\star}}))_{+}]
=
\bbE[(f(X_k) - f(X_{k_{\star}}))_{+}]
&\le (L + \sigma)^2 \eta
& &\text{for $k_{\star} \leq k \le n$,}
\end{align}
which provides a bound of order $\eta$ on how the expected suboptimality can degrade for the iterates following the $k_{\star}$-th. In the standard fixed-stepsize regime $\eta=\Theta(1/\sqrt n)$, this is precisely of order $1/\sqrt n$. In particular, taking $k = n$ proves the desired inequality for the last iterate.

\noindent The proof of \eqref{eq:drift} requires several new ideas. One key step is to argue conditionally on $\cF_{k_{\star}}$, so that we may treat the value of $f(X_{k_{\star}})$ as a known constant, and we may reduce to a simpler problem where we want to deduce \eqref{eq:drift} knowing that for some $q \ge f_{\star}$ we have $f(X_{k_{\star}}) \le q$. This is accomplished thanks to Lemma~\ref{lem:frozen}, where this additional information is leveraged to study $\bbE[(f(X_k) - q)_{+}]$ via the construction of one-sided random sequences that dominate the distance of $X_k$ from a suitable enlargement of the sublevel $\{ f \le q\}$. If a side of the domain is absent or infinite, the corresponding one-sided sequence is either void or lives on $[0,+\infty)$; the latter case is handled by truncation and Fatou's lemma in Corollary~\ref{cor:technical-extended}.
We make use of the theory of general state-space Markov chains, and in particular of a Foster--Lyapunov-type argument~\citep{meyn2009}, to show that the expected excess generated by these excursions remains of order $\eta$, uniformly over the remaining time.

\noindent Under the hypotheses of Theorem~\ref{thm:bound_fh_simpler}, \eqref{eq:sgd-bound-theta} states that every standard fixed-stepsize family $\eta=\Theta(1/\sqrt n)$ has last-iterate error controlled by $C/\sqrt n$, where $C=C(\underline c,\overline c,\operatorname{dist}(x_1,\cX_{\star}),L,\sigma)$ is independent of $n$. Thus this result uses the same interpretation of the $\Theta(1/\sqrt n)$ notation as the lower bound below.
Our second contribution shows that the two kernel symmetries in Theorem~\ref{thm:bound_fh_simpler} are both necessary in a minimax sense, and therefore  Assumption~\ref{ass:centered_additive_l2_noise} alone, is not enough to rule out the logarithm. In particular, the following  shows that the $\log n$ factor is present even if this base condition is strengthened by imposing time-homogeneity alone, or by imposing state-independence alone.
\begin{theorem} \label{thm:clock-counterexamples}
Let $\cX\coloneq [-2,1]$, let $f(x)\coloneq \max\{x,0\}$,
so that
Assumption~\ref{ass:feasibility} holds with $\cX_{\star} \coloneq [-2, 0]$ and Assumption~\ref{ass:convexity_lipschitzness} holds with $L=1$.
Let $g \colon \cX \to [-1, 1]$ be the relative-subgradient selection
associated with $f$ defined by
\begin{equation*}
g(x)\coloneq
\begin{cases}
0, & x\le 0,\\
1, & x > 0.
\end{cases}
\end{equation*}
Fix constants $0<\underline c\le\overline c<\infty$.
Then there exists $n_0=n_0(\underline c,\overline c)\in\bbN$ such that the following holds.
For every $n\ge n_0$ and every $c_n\in[\underline c,\overline c]$, setting $\eta\coloneq c_n/\sqrt n$,
the following hold:
\begin{enumerate}
\item\label{it:state-dependent-lb}
There exists a noise process $(W_k)_{k \in \bbN}$ satisfying Assumptions~\ref{ass:additive-kernel-oracle} and~\ref{ass:time-homogeneous-noise}, but not Assumption~\ref{ass:state-independent-noise}, with $\sigma^2=3$ such that the iterates generated by \eqref{eq:sgd} with some deterministic initialization $X_1\in\cX_\star$ and $G_k\coloneq g(X_k)$ satisfy
\begin{align}
\bbE[f(X_{n})-f_{\star}]
\ge
\frac{1}{512}\,\eta\log\frac1\eta 
\ge
\frac{\underline c}{2^{11}}\frac{\log{n}}{\sqrt{n}}.
\label{eq:sd-main-lower-bound}
\end{align}
\item\label{it:time-inhomogeneous-lb}
There exists a noise process $(W_k)_{k \in \bbN}$ satisfying Assumptions~\ref{ass:additive-kernel-oracle} and~\ref{ass:state-independent-noise}, but not Assumption~\ref{ass:time-homogeneous-noise}, with $\sigma^2 = 1/2$ such that the iterates generated by \eqref{eq:sgd} with $X_1\equiv 0$ and $G_k\coloneq g(X_k)$ satisfy
\begin{align}
\bbE[f(X_{n})-f_{\star}]
\ge
\frac{1}{128}\,\eta\log\frac1\eta
\ge
\frac{\underline c}{2^{11}}\frac{\log{n}}{\sqrt{n}}.
\label{eq:ti-main-lower-bound}
\end{align}
\end{enumerate}
\end{theorem}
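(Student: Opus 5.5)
We first reduce both items to lower bounding $\bbE[X_n^+]$. Indeed $f(X_n)-f_\star=\max\{X_n,0\}$. The mechanism in both constructions is the same: a centered, bounded-variance noise that, with probability of order $1/t^2$ at a time $t$ steps before the horizon, produces a single ``upward kick'' of height of order $\eta t$. The kick survives until time $n$ at height of order $\eta t$. Summing the contributions $\frac{1}{t^2}\cdot\eta t$ over $1\le t\lesssim 1/\eta$ gives $\eta\sum_t 1/t\approx\eta\log(1/\eta)$. The second inequality in \eqref{eq:sd-main-lower-bound} and \eqref{eq:ti-main-lower-bound} then follows from $\eta\log(1/\eta)\ge \frac{\underline c}{\sqrt n}\cdot\frac12\log n$ once $n\ge n_0(\underline c,\overline c)$.

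\emph{Item~\ref{it:time-inhomogeneous-lb} (state-independent, time-varying).} Take $X_1=0$. For $t\in\{1,\dots,T\}$ with $T\coloneq\lfloor 1/(8\eta)\rfloor$, set $m_t\coloneq 4t$ and let $\mu_{n-t}$ be the two-point centered law
\[
W=-m_t \text{ w.p. } p_t\coloneq\tfrac{1}{2m_t^2},\qquad W=\tfrac{m_tp_t}{1-p_t} \text{ w.p. } 1-p_t .
\]
Its variance is $m_t^2p_t/(1-p_t)\le 1/2$ (adjusting constants slightly if needed). At all other times take $\mu_k=\delta_0$.

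I would use three observations.
\begin{itemize}
\item[(a)] A single step never decreases the iterate by more than $2\eta$, since $g\le 1$ and the positive noise values are at most $1$. Projection at $-2$ only helps, and projection at $1$ is harmless while heights stay below $1$.
\item[(b)] If the kick occurs at time $n-t$ (event $A_t$), then $X_{n-t+1}\ge X_{n-t}+\eta(m_t-1)\ge -2\eta t+4\eta t-\eta$. By (a), this gives $X_n\ge 4\eta t-2\eta t-2\eta t-\eta+\dots$. Choosing constants a bit more generously, say $m_t=8t$, gives $X_n\ge c\,\eta t$.
\item[(c)] Negative noise values only push upward, so on $A_t$ the bound $X_n^+\ge c\eta t$ holds regardless of the other times.
\end{itemize}
Let $B_t\coloneq A_t\cap\bigcap_{s\neq t}A_s^c$. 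Then
\[
\bbE[X_n^+]\ge\sum_t \Pb(B_t)\,c\eta t,\qquad \Pb(B_t)\ge p_t\prod_s(1-p_s)\ge p_t/2,
\]
because $\sum_s p_s<1/2$. This yields $\gtrsim\eta\sum_{t\le T}1/t$. The noise is independent across times and state-independent, but it is not time-homogeneous.

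\emph{Item~\ref{it:state-dependent-lb} (time-homogeneous, state-dependent).} Here the time must be encoded in the state. I use $\cX_\star=[-2,0]$ as a deterministic clock, since $f=0$ there. For $x\le 0$, the kernel $Q(x,\cdot)$ is a centered mixture of three pieces:
\begin{itemize}
\item[(i)] a ``drift'' value $-a$ with $a\asymp 1/(\eta n)\asymp\eta$, so that the iterate climbs by $\eta a\approx 2/n$ per step;
\item[(ii)] a rare compensating spike $+K/\eta$ with probability $\approx a\eta/K$. Its variance contribution is $\approx aK/\eta\asymp K/c_n^2=O(1)$, and the expected number of spikes over $n$ steps is $O(1/K)$, hence small for a large constant $K$;
\item[(iii)] a heavy upward kick $-m(x)$ with probability $1/(2m(x)^2)$. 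Here $m(x)\asymp t(x)$, where $t(x)\asymp |x|/(\eta a)$ is the number of clock steps remaining before reaching $0$. This contributes variance at most $1/2$.
\end{itemize}
For $x>0$ take $Q(x,\cdot)=\delta_0$, so that $X$ decreases at rate exactly $\eta$. Choosing $X_1\in[-2,0]$ so that the undisturbed clock reaches $0$ exactly at time $n$ makes $X_{n-t}\approx -2\eta t$ deterministically on the good event that no spike of type (ii) occurs. The argument of Item~\ref{it:time-inhomogeneous-lb} then applies verbatim, with $\mu_{n-t}$ replaced by $Q(X_{n-t},\cdot)$. Balancing the total variance to $\le 3$ fixes the constants.

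The main obstacle is Item~\ref{it:state-dependent-lb}. There I must check three things carefully.
\begin{itemize}
\item The clock stays synchronized with probability bounded below. This means controlling the rare spikes and the cumulative effect of the drift-compensation mass.
\item After a kick, the iterate's return to $\{x\le 0\}$ or its interaction with the kernel does not destroy the bound. This is handled because only the first kick matters, together with monotonicity observation (a).
\item The total variance is at most $3$ uniformly in $x$, with constants independent of $n$. This is what determines $n_0(\underline c,\overline c)$.
\end{itemize}
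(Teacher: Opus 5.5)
Your mechanism is the paper's: kicks of size $\Theta(t)$ with mass $\Theta(1/t^2)$ when $t$ steps remain, summed harmonically, plus a clock in the flat region for the state-dependent case. However, the state-dependent kernel you describe does not work. \emph{First, you never center the kick (iii).} It has mean $-1/(2m(x))$, while your spike (ii), with mass $\approx a\eta/K$, only offsets the drift atom $-a$. There are two natural ways to fix this, and both fail. If the $K/\eta$ spike also absorbs $1/(2m)$, its mass grows by $\eta/(2mK)$ and its second moment by $K/(2m\eta)$. That is of order $\sqrt n$ at the clock states with $t=O(1)$, which is exactly where the payoff is collected. If instead you center (iii) as in item 2, by shifting the main atom, that atom becomes $-a+\Theta(1/t)$, which is positive for $t\lesssim 1/a$. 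For those $t$ the clock then moves away from $0$, so it no longer reaches $0$ at time $n$ as you claim, and distinctness of the visited states would have to be re-proved. What is missing is a separate compensating atom at the near states, of size $\Theta(t)$ and mass $\Theta(1/t^2)$, merged with the drift compensation. This is the paper's $R_j$, $b_j$, and it is what yields second moment $\lambda^2+11/4<3$. \emph{Second, your spike is not uniform in $\underline c$.} With $a=2/(n\eta)$, the spike's second moment is $\approx 2K/c_n^2$ and the expected number of spikes in $n$ steps is $\approx 2/K$. Their product is $4/c_n^2$, independent of $K$, so for small $\underline c$ you cannot have both $\sigma^2\le 3$ and synchronization with constant probability. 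Enlarging $n_0$ does not help, since neither quantity depends on $n$. The paper avoids this by slowing the clock to steps $\eta\lambda$ with $\lambda=\min\{\eta,1/(n\eta)\}$, starting at $-n\eta\lambda\in[-1,0]$ rather than near $-2$. Its spike $(1-\lambda^2)/\lambda$ has mass $\lambda^2\le 1/n$, giving second moment $\le 1$ and $(1-\lambda^2)^n\ge 1/4$ for every $c_n$.

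In item 2, the step $X_{n-t}\ge -2\eta t$ in (b) is unjustified and is false for small $t$. On $B_t$, before time $n-t$ the iterate sits where $g=0$. At each special time $n-s$ with $t<s\le T$ it is pushed down by $\eta\, m_sp_s/(1-p_s)\approx \eta/(2m_s)$, for a total of order $\eta\log(T/t)$. Once $n$ is large, this exceeds $2\eta t$ for every $t$ up to a constant times $\log(1/\eta)$. The loss is only $O(\eta\log\log(1/\eta))$, so it is repairable. The clean fix is the paper's symmetric law: $\pm 4t$ with mass $1/(64t^2)$ each, and $0$ otherwise. Under it the iterate stays exactly at $0$ until the kick, and the second moment is exactly $1/2$. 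Your two-point law has second moment $1/(2(1-p_t))>1/2$, so it violates the stated $\sigma^2=1/2$. Finally, the statement fixes the constants $\frac{1}{512}$, $\frac{1}{128}$ and $\sigma^2\in\{3,\frac12\}$. The proposal never checks that its repeatedly adjusted parameters actually deliver them.
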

\noindent We defer the proof to \Cref{sec:displacement-lower-bounds} and here we make some comments on the lower bound strategy.
The two lower bounds given in Theorem~\ref{thm:clock-counterexamples} implement in different ways a common mechanism to displace the iterates far from the flat optimal set. For labels $j = 1,2,\ldots,M$, with a suitable $M$ of order $1/\eta$ and hence of order $\sqrt n$ for standard fixed stepsizes, the mechanism specifies one rare large noise outcome for each label that is associated with the noise variables. The labels determine for which noise variables this outcome can occur: in the time-inhomogeneous construction, it can be sampled from the noise law at the time when $j$ steps remain, while in the state-dependent construction it can be sampled from the noise law at a special state inside the flat optimal set. The noise outcome associated with label $j$ has magnitude proportional to $j$, and the noise law at the corresponding time or state assigns this outcome probability mass of order $1/j^2$. Thus, when this outcome is sampled, the SsGM update displaces the iterate by order $\eta j$. In the state-dependent construction, the non-rare noise outcomes move the iterate from one labeled state to the next, and, along the path where the earlier noise samples take these non-rare values, the state labeled by $j$ is reached when about $j$ steps remain. In both constructions, sampling the rare large outcome associated with label $j$ produces terminal error of order $\eta j$, so its expected contribution is of order $\eta/j$. Summing over $j = 1,2,\ldots,M$ gives $\eta\log(1/\eta)$, which is of order $(\log n)/\sqrt n$ uniformly over all $c_n\in[\underline c,\overline c]$.

\section{Proof of Theorem~\ref{thm:bound_fh_simpler}}\label{sec:upper_bound_proof}
\noindent The proof is based on a monotone quantitative estimate on how the contribution of the term $- \eta \cdot G_k$ in \eqref{eq:sgd} can move $X_k$ toward or beyond a certain threshold separating $X_k$ from $\cX_{\star}$. This will involve the object introduced in the following definition:
\begin{definition}\label{def:T}
Let $D\in \bbR_{\ge 0} \cup \{+\infty\}$, let $\cI \coloneq [0,D]$, let $m: \cI \to \bbR_{\ge 0}$ be non-decreasing and let $\eta >0$. We define
\begin{equation*}
T_{\cI, m, \eta}(z)\coloneq 
\sup_{0\le s\le z} \lrb{s - \eta \cdot m(s)}_{+},
\qquad z\in\cI,
\end{equation*}
where $(u)_{+}\coloneq \max\{u,0\}$.
\end{definition}
\noindent We study some crucial properties of the object defined in Definition~\ref{def:T}
\begin{proposition}
In the setting of Definition~\ref{def:T}:
\begin{enumerate}
\item \label{it:T-non-decr} $T_{\cI, m, \eta}$ is non-decreasing;
\item \label{it:T(z)-le-z} The inequalities $0 \le T_{\cI, m, \eta}(z) \le z$ hold for every $z \in \cI$;
\item \label{it:T-lip} $T_{\cI, m, \eta}$ is $1$-Lipschitz on $\cI$;
\item \label{it:cornerstone} For every $z \in \cI$,
\begin{equation}\label{eq:cornerstone-ineq}
 \eta \cdot \int_{0}^z m(s)\dif s \le \bigl(z-T_{\cI,m,\eta}(z)\bigr)\lrb{z+\eta \cdot m(z)} .
\end{equation}
\end{enumerate}
\end{proposition}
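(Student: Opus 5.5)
Write $T\coloneq T_{\cI,m,\eta}$ and $\phi(s)\coloneq (s-\eta m(s))_{+}$, so that $T(z)=\sup_{0\le s\le z}\phi(s)$. Items~\ref{it:T-non-decr} and~\ref{it:T(z)-le-z} follow directly from this form. $T$ is a running supremum over the growing set $[0,z]$, hence non-decreasing. We have $T\ge0$ because each $\phi(s)\ge 0$. Finally, $\phi(s)\le s\le z$ for $s\le z$, because $m\ge 0$ and $\eta>0$, so $T(z)\le z$.

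For item~\ref{it:T-lip}, fix $0\le z<z'$ in $\cI$. By monotonicity it suffices to show $T(z')\le T(z)+(z'-z)$. Take any $s\in(z,z']$. Since $m$ is non-decreasing, $m(s)\ge m(z)$, so
\[
s-\eta m(s)\le (s-z)+\bigl(z-\eta m(z)\bigr)\le (z'-z)+\phi(z).
\]
Hence $\phi(s)\le (z'-z)+T(z)$, using also that $(z'-z)+T(z)\ge 0$ to handle the positive part. For $s\le z$ the bound $\phi(s)\le T(z)$ is trivial. Taking the supremum over $s\in[0,z']$ gives the claim. The key point is that monotonicity of $m$ turns the supremum into a quantity with slope at most one.

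For item~\ref{it:cornerstone}, fix $z$ and set $t\coloneq T(z)\in[0,z]$. I will split the integral at $t$:
\[
\eta\int_0^z m=\eta\int_0^t m+\eta\int_t^z m.
\]
The second piece is at most $(z-t)\,\eta m(z)$ by monotonicity of $m$. For the first piece I need $\eta\int_0^t m\le (z-t)z$, or more conveniently $\eta\int_0^t m\le (z-t)\,t+\dots$. The main obstacle is extracting information on $m$ below $t$ from the definition of $T$. Since $\phi(z)\le T(z)=t$, we get $z-\eta m(z)\le t$, i.e. $\eta m(z)\ge z-t$. That is a lower bound on $m$, which goes the wrong way for bounding $\int_0^t m$ from above, so I expect to need the opposite tool.

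The right observation is that $\eta m(s)\le \eta m(z)$ for all $s\le z$, and then I can simply bound the whole integral directly:
\[
\eta\int_0^z m\le z\,\eta m(z).
\]
This suffices exactly when $z\,\eta m(z)\le (z-t)(z+\eta m(z))$, i.e. when $t\,\eta m(z)\le (z-t)z$. That is not automatic, so I will combine the two estimates more carefully.

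Let $s_0\coloneq\sup\{s\le z:\ s-\eta m(s)\ge t\}$, which is roughly the point where $\phi$ nearly attains $t$. For $s\le s_0$ I plan to use $\eta m(s)\le \eta m(s_0)\lesssim s_0-t\le z-t$. For $s\in(s_0,z]$ I use $\eta m(s)\le\eta m(z)$. This gives
\[
\eta\int_0^z m\le s_0(z-t)+(z-s_0)\,\eta m(z)\le z(z-t)+(z-t)\,\eta m(z),
\]
where the last step needs $z-s_0\le z-t$, i.e. $s_0\ge t$. This holds because $\phi(s)\le s$ forces any near-maximizer to satisfy $s\ge t$.

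The delicate step is the first one, namely justifying $\eta m(s)\le z-t$ for $s$ below the near-maximizer. The issue is that $m$ may be discontinuous and the supremum may not be attained. I would handle this with an $\varepsilon$-approximate maximizer $s_\varepsilon$ satisfying $s_\varepsilon-\eta m(s_\varepsilon)\ge t-\varepsilon$, so that $\eta m(s)\le \eta m(s_\varepsilon)\le s_\varepsilon-t+\varepsilon\le z-t+\varepsilon$ for $s\le s_\varepsilon$. Then I let $\varepsilon\downarrow0$. When $t=0$ the statement reduces to $\eta\int_0^z m\le z(z+\eta m(z))$, which is trivial from the direct bound above.
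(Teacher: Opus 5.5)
Your proof is correct and follows the paper's argument: items 1--3 are handled the same way, and for item 4 the paper likewise treats $T(z)=0$ separately, splits $\int_0^z m$ at an arbitrary $s^*\in[0,z]$, and uses $\eta m(s^*)\le z-(s^*-\eta m(s^*))$ and $z-s^*\le z-(s^*-\eta m(s^*))$ before taking the infimum over $s^*$, which is exactly your $\varepsilon$-approximate-maximizer argument with $\varepsilon\downarrow 0$. You can drop the intermediate $s_0=\sup\{s\le z: s-\eta m(s)\ge t\}$ step: that set can be empty when the supremum defining $T(z)$ is not attained because $m$ jumps. Your $\varepsilon$ version already covers this case.
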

\begin{proof}
\noindent For brevity, set $T \coloneq T_{\cI, m, \eta}$. 

\noindent \ref{it:T-non-decr}. Follows directly by the definition.

\noindent \ref{it:T(z)-le-z}. For every $0\le s\le z$, we have $0\le (s-\eta m(s))_{+}\le s\le z$. Taking the supremum over $0\le s\le z$ gives $0\le T(z)\le z$.

\noindent \ref{it:T-lip}. Since $T$ is non-decreasing, it is enough to prove that for every $x$, $y \in \cI$, with $x \le y$,
\begin{equation}\label{eq:T-is-lip}
T(y)-T(x)\le y-x.
\end{equation}
If $x=y$, this is trivial. If $x < y$, note that
\begin{equation}\label{eq:T-identity}
T(y)=\max\left\{T(x),\sup_{x<s\le y}\lrb{s-\eta m(s)}_{+}\right\}.
\end{equation}
For every $x<s\le y$, monotonicity of $m$ and the definition of $T(x)$ yield
\begin{equation*}
s-\eta m(s)
\le y-\eta m(x)
\le y-x+x-\eta m(x)
\le y-x+T(x).
\end{equation*}
Thus $\lrb{s-\eta m(s)}_{+}\le y-x+T(x)$, and \eqref{eq:T-identity} gives \eqref{eq:T-is-lip}. 

\noindent \ref{it:cornerstone}. If $T(z) = 0$, then \eqref{eq:cornerstone-ineq} follows by $\int_{0}^z m(s) \dif s \le z \cdot m(z)$. If $T(z) > 0$, then for every $0 \le s^{*} \le z$, monotonicity of $m$ gives
\begin{equation*}
\int_{0}^z m(s) \dif s = \int_{0}^{s^*} m(s) \dif s + \int_{s^{*}}^z m(s) \dif s \le m(s^{*}) \cdot s^{*} + m(z) \cdot (z - s^{*}) .
\end{equation*}
Multiplying by $\eta$ and using that $m(s^{*}) \ge 0$ and $z \ge s^{*}$, we get
\begin{align*} 
\eta \cdot \int_{0}^z m(s) \dif s
&\le
 \eta \cdot m(s^{*}) \cdot s^{*} + \eta \cdot m(z) \cdot (z - s^{*})\\
&=
 (s^{*} - s^{*} + \eta \cdot m(s^{*})) \cdot s^{*}+(z - s^{*} + \eta \cdot m(s^{*}) - \eta \cdot m(s^{*})) \cdot \eta \cdot m(z)\\
&\le
(z - s^{*} + \eta \cdot m(s^{*})) \cdot z + (z - s^{*} + \eta \cdot m(s^{*})) \cdot \eta \cdot m(z)\\
&=
\Big( z - \big(s^{*} - \eta \cdot m(s^{*})\big)\Big) \cdot \lrb{z + \eta \cdot m(z)}.
\end{align*}
Then,
\begin{align*}
\eta \cdot \int_{0}^z m(s) \dif s
&\le \inf_{0 \le s^{*} \le z} \Big( z - \big(s^{*} - \eta \cdot m(s^{*})\big)\Big)\cdot \lrb{z + \eta \cdot m(z)}\\
&= \lrb{ z - \sup_{0 \le s^{*} \le z}(s^{*} - \eta \cdot m(s^{*})) }\cdot \lrb{z + \eta \cdot m(z)}\\
&= (z - T(z)) \cdot \lrb{z + \eta \cdot m(z)},
\end{align*}
where the last step uses $T(z) > 0$.
\end{proof}

\noindent The next proposition provides the essential technical result to prove the key subsequent Lemma~\ref{lem:frozen}.
\begin{proposition}\label{prop:technical}
Let $\cI = [0, D]$ be a closed bounded interval, let $m \colon \cI \to [0, L]$ be non-decreasing, and let $\eta > 0$.
Let $R$ ,$R_1$, $R_2$, $\dots$ be an i.i.d.\ sequence of random variables such that
$\bbE[R]= 0$ and $\bbE[R^2] \le \sigma^2$.
Define the sequence $(Z_k)_{k \in \bbN}$ by
\begin{align}\label{eq:chain}
\begin{cases}
Z_1 \equiv 0,\\
Z_{k + 1} = \Pi_{\cI} (T_{\cI, m, \eta}(Z_k) - \eta \cdot R_k ) & k \geq 1.
\end{cases} 
\end{align}
Then:
\begin{equation*}
\forall k \ge 1, \qquad 
\bbE\lsb{\int_0^{Z_k} m(s) \dif s} \le \lrb{\frac{\sigma^2}{2} + L \sigma}\cdot \eta .
\end{equation*}
\end{proposition}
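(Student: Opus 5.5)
The plan is a Foster--Lyapunov drift argument with the quadratic potential $V(z)\coloneq z^2/2$, combined with a monotonicity (stochastic ordering) argument that turns an averaged bound into a bound valid for every $k$. Write $T\coloneq T_{\cI,m,\eta}$ and $M(z)\coloneq\int_0^z m(s)\dif s$.

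\emph{Step 1 (one-step drift).} Since $0\in\cI$, projection onto $\cI=[0,D]$ does not increase absolute values, so
\[
Z_{k+1}^2\le \bigl(T(Z_k)-\eta R_k\bigr)^2 .
\]
The variable $R_k$ is independent of $Z_k$, which is a function of $R_1,\dots,R_{k-1}$. Using $\bbE[R]=0$ and $\bbE[R^2]\le\sigma^2$, this gives
\[
\bbE[V(Z_{k+1})\mid Z_k]\le V(T(Z_k))+\tfrac{\eta^2\sigma^2}{2}.
\]
The drift is therefore $-\tfrac12(Z_k-T(Z_k))(Z_k+T(Z_k))+\eta^2\sigma^2/2$.

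\emph{Step 2 (relating the drift to $M$).} Here I use the cornerstone inequality \eqref{eq:cornerstone-ineq}: $\eta M(z)\le (z-T(z))(z+\eta m(z))$. From the definition of $T$ we have $T(z)\ge z-\eta m(z)$, so $0\le z-T(z)\le \eta m(z)\le\eta L$. Writing $z+\eta m(z)=(z+T(z))+(\eta m(z)-T(z))$, I aim for an inequality of the form
\[
\eta M(z)\le (z^2-T(z)^2)+(\text{error of order }\eta^2).
\]
Combined with Step 1, this gives
\[
\eta\,\bbE[M(Z_k)]\le 2\,\bbE[V(Z_k)-V(Z_{k+1})]+C\eta^2 .
\]

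\emph{Step 3 (main obstacle: getting the constant $\sigma^2/2+L\sigma$).} The crude bound $(z-T)\eta m\le \eta^2L^2$ produces an $L^2$ term, which is wrong. To obtain the stated constant, I would instead expand $(T-\eta R)^2$ exactly, keeping the first-order term. I would use the projection at $0$ only on the event $T-\eta R<0$, and control the cross term via $\bbE|R|\le\sigma$ together with $z-T(z)\le\eta m(z)$. This should produce a mixed term $\eta^2 L\sigma$ in place of $\eta^2L^2$. I expect the bookkeeping of this step to be the main difficulty. If the bare quadratic potential does not deliver the exact constant, my fallback is a potential adapted to $m$, for instance $V(z)=\int_0^z(s+\eta m(s))\dif s$.

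\emph{Step 4 (from averages to every $k$).} The update map $z\mapsto \Pi_\cI(T(z)-\eta r)$ is non-decreasing in $z$, since $T$ and $\Pi_\cI$ are both non-decreasing. Couple the chain with its one-step shift using the same noise; since $Z_1=0\le Z_2$, induction gives $Z_k\preceq_{\mathrm{st}} Z_{k+1}$. Hence $k\mapsto\bbE[M(Z_k)]$ is non-decreasing, because $M$ is non-decreasing. Telescoping Step 2 over $j=k,\dots,k+N-1$ then gives
\[
\bbE[M(Z_k)]\le\frac1N\sum_{j=k}^{k+N-1}\bbE[M(Z_j)]\le \frac{2\bbE[V(Z_k)]}{\eta N}+C\eta .
\]
Since $Z_k\in[0,D]$ with $D<\infty$, letting $N\to\infty$ yields $\bbE[M(Z_k)]\le C\eta$ with $C=\sigma^2/2+L\sigma$.
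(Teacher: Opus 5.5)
Your Step~4 is correct and is a genuinely different route from the paper. The paper builds an invariant law through the Feller property and Krylov--Bogoliubov, then dominates $Z_k$ by a stationary copy. You instead dominate the chain by its own one-step shift, so $k\mapsto\bbE[M(Z_k)]$ is non-decreasing and a telescoped Ces\`aro average suffices. This is more elementary, and since it only needs $\bbE[V(Z_k)]<\infty$ for each fixed $k$, it would even cover $D=+\infty$ without Corollary~\ref{cor:technical-extended}.

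\textbf{The gap.} Step~4 presupposes a one-step drift inequality with constant exactly $\sigma^2/2+L\sigma$, and Steps~2--3 never produce one. With $V(z)=z^2/2$ as you set it up, they cannot. Take $D\ge\eta L$, $R\equiv0$, $m\equiv L$ and $z=\eta L/2$. Then $T(z)=(z-\eta L)_{+}=0$ and the chain moves to $0$. We get $\eta M(z)=\eta^2L^2/2$ while $z^2-\bbE[Z_{k+1}^2\mid Z_k=z]=\eta^2L^2/4$. So the Step~2 inequality forces $C\ge L^2/4$, which exceeds $\sigma^2/2+L\sigma$ whenever $\sigma<L/5$. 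This defect is purely deterministic: it persists with zero noise, so no finer treatment of the projection or of the noise can remove it. Moreover, the cross term $-2T(z)\eta R$ has mean zero, so the mechanism you describe for producing an $\eta^2L\sigma$ term does not exist.

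\textbf{The missing idea.} You need a first-order component in the potential that absorbs the deterministic defect exactly. The paper takes $B(z)=z^2/(2\eta)+2Lz$. Combining \eqref{eq:cornerstone-ineq} with $z-T(z)\le\eta L$ gives $B(T(z))-B(z)\le-\int_0^z m(s)\dif s$ with no error term. The noise then costs $\eta\sigma^2/2$ through the quadratic part. Through the linear part it costs $2L\eta\,\bbE[R_{-}]=L\eta\,\bbE|R|\le L\sigma\eta$, because the projection can raise the state by at most $\eta R_{-}$; this is where the $L\sigma$ term comes from.

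Your fallback $V(z)=z^2/2+\eta M(z)$ would also work, and even yields $L\sigma/2$ in place of $L\sigma$. However, it requires proving $\eta M(T(z))\le\tfrac12\bigl(z^2-T(z)^2\bigr)$, which is not the cornerstone inequality. It follows from $\eta\,m(t)\le z-T(z)$ for every $t<T(z)$, a consequence of the definition of $T$ and the monotonicity of $m$. Together with $T(z)\le\tfrac12(z+T(z))$, that gives $\eta M(T(z))\le T(z)\,(z-T(z))\le\tfrac12\bigl(z^2-T(z)^2\bigr)$. As written, the proposal leaves the central estimate of the proposition unproved.
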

\begin{proof}
\noindent Once again, set $T \coloneq T_{\cI, m, \eta}$. We now discuss \eqref{eq:chain} in the framework of general state space Markov chains. The reader may find in Section~\ref{sec:markov} some basic notions and standard notation related to this topic and transition probability kernels.
Consider the map $\Phi \colon \cI \times \bbR \to \cI$ given by
\begin{equation*}\label{eq:map-increment-noise}
\Phi(z, r) \coloneq \Pi_{\cI} (T(z) - \eta \cdot r), 
\end{equation*}
which is continuous by continuity of $T$ and of the projection $\Pi_\cI$.
Let $\mu \coloneq \law(R)$ be the law of $R$, write $\borel(\cI)$ for the Borel $\sigma$-field of $\cI$, and note that the map $P \colon \cI \times \borel(\cI) \to [0, 1]$ defined by
\begin{equation*}\label{eq:kernel}
P(z, \cA) \coloneq \int_{\bbR} \bbI \lcb{\Phi(z, r) \in \cA } \dif\mu(r) = \bbE[ \bbI\{\Phi(z, R) \in \cA\}]
\end{equation*}
is a transition probability kernel. Moreover, $Z_{k+1}=\Phi(Z_k,R_k)$, and the i.i.d.\ hypothesis on $(R_k)_{k \in \bbN}$ shows that $(Z_k)_{k \in \bbN}$ is a time-homogeneous Markov chain on the compact state space $\cI$ with transition probability kernel $P$ and initial distribution $\delta_0$.
\noindent We assume the two following claims:
\begin{claim}\label{claim:invariant}
There exists a probability measure $\pi$ over $(\cI, \borel(\cI))$ that is invariant for the chain, i.e.,
\begin{equation}\label{eq:inv-measure}
 \pi P = \pi.
\end{equation}
\end{claim}
\begin{claim}\label{claim:BC}
There exists a bounded non-negative measurable function $B \colon \cI \to \bbR$ and $C \in \bbR$ that satisfy the inequality
\begin{equation}\label{eq:fl-type-ineq}
P B(z) - B(z) \le -\int_{0}^{z} m(s) \dif s + C \cdot \eta,
\qquad
z \in \cI.
\end{equation}
More specifically, a solution is given by
\begin{equation}\label{eq:BC}
B(z)
\coloneq
\frac{z^2}{2 \eta} + 2 L z
\qquad
C \coloneq \frac{\sigma^2}{2} + L \sigma.
\end{equation}
\end{claim}
\noindent Let $B$ and $C$ be as in \eqref{eq:BC}, and let $\pi$ be the invariant measure satisfying \eqref{eq:inv-measure}. Then,
\begin{equation*}
\int_{\cI} \pi(\dif z) P B(z) = \int_{\cI} \pi(\dif z) B(z).
\end{equation*}
Therefore, integrating \eqref{eq:fl-type-ineq} with respect to $\pi$ gives
\begin{equation}\label{eq:int-the-int}
\int_{\cI} \pi(\dif z) 
\lrb{\int_{0}^{z} m(s) \dif s}
\le
\lrb{\frac{\sigma^2}{2} + L \sigma}
\cdot
\eta.
\end{equation}
\noindent To conclude the proof, define $(Y_k)_{k \in \bbN}$ by taking $Y_1 \sim \pi$ independent of $(R_{k})_{k \in \bbN}$ and setting
\begin{equation*}
\forall k \ge 1,\qquad Y_{k + 1} = \Phi(Y_k, R_k).
\end{equation*}
Then $Y_k\sim\pi$ for every $k$ by definition of $\pi$.
Note that $Z_1 = 0 \le Y_1 \in \cI$, and we can show by induction that $Z_k \le Y_k$ for every $k \ge 1$, since for every fixed $r$ the map $s \mapsto \Phi(s,r)$ is non-decreasing, hence
\begin{equation*}
\forall k \ge 1, \qquad Z_k \le Y_k \quad \implies \quad Z_{k+1} =\Phi(Z_k, R_k) \le \Phi(Y_k, R_k) = Y_{k+1}. 
\end{equation*}
Using that $z\mapsto\int_0^z m(s)\dif s$ is also non-decreasing, it follows that
\begin{align*}
\bbE\lsb{\int_0^{Z_k} m(s) \dif s}
&\le
\bbE\lsb{\int_0^{Y_k} m(s) \dif s}
=\int_{\cI} \pi(\dif z) \lrb{\int_{0}^{z} m(s) \dif s}
\le
\lrb{\frac{\sigma^2}{2} + L \sigma}\eta,
\end{align*}
where the last inequality is \eqref{eq:int-the-int}.
\end{proof}
\noindent We can extend Proposition~\ref{prop:technical} to the case $D = + \infty$.

\begin{corollary}\label{cor:technical-extended}
Let $\cI\coloneq[0, +\infty]$, let $m\colon\cI\to[0,L]$ be non-decreasing, and let $\eta > 0$.
Let $R$, $R_1$, $R_2$,~$\dots$ be an i.i.d.\ sequence of random variables 
such that $\bbE[R]= 0$ and $\bbE[R^2] \le \sigma^2$. Define $(Z_k)_{k \in \bbN}$ by
\begin{align*}
\begin{cases}
Z_1\equiv0,\\
Z_{k+1}=\Pi_\cI\lrb{T_{\cI,m,\eta}(Z_k) - \eta R_k} & k \ge 1,
\end{cases}
\end{align*}
Then
\begin{equation*}
\forall k \ge 1, \qquad \bbE\lsb{\int_0^{Z_k}m(s)\dif s}
\le
\lrb{\frac{\sigma^2}{2}+L\sigma}\eta .
\end{equation*}
\end{corollary}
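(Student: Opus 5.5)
The plan is to reduce to Proposition~\ref{prop:technical} by truncation, as announced in the text, and then pass to the limit with Fatou's lemma. For each $D\in\bbN$, set $\cI_D\coloneq[0,D]$ and $m_D\coloneq m|_{\cI_D}$. The restriction $m_D$ is still non-decreasing with values in $[0,L]$. Let $(Z^{(D)}_k)_{k}$ be the chain \eqref{eq:chain} built on $\cI_D$ with $m_D$ and the \emph{same} noise sequence $(R_k)$. Proposition~\ref{prop:technical} then gives, for every $k$ and every $D$,
\begin{equation*}
\bbE\lsb{\int_0^{Z^{(D)}_k}m(s)\dif s}\le\lrb{\frac{\sigma^2}{2}+L\sigma}\eta .
\end{equation*}

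The next step is to show that, for each fixed $k$ and each fixed trajectory, $Z^{(D)}_k=Z_k$ as soon as $D$ is large enough. First note that for $z\in\cI_D$ we have $T_{\cI_D,m_D,\eta}(z)=T_{\cI,m,\eta}(z)$. This holds because the supremum in Definition~\ref{def:T} only involves $s\in[0,z]$, so the truncation does not affect it. I then argue by induction on $k$ that $Z^{(D)}_k=Z_k$ whenever $D\ge D_k(\omega)\coloneq\max_{j\le k}Z_j(\omega)$. This threshold is finite because each $Z_j$ is real-valued: by part~\ref{it:T(z)-le-z} of the preceding proposition, $Z_{j+1}\le Z_j+\eta|R_j|$, so $Z_j\le\eta\sum_{i<j}|R_i|<\infty$. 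For the inductive step, suppose $Z^{(D)}_k=Z_k$. Then $Z^{(D)}_{k+1}=\Pi_{[0,D]}(T(Z_k)-\eta R_k)$. The unrestricted update is $Z_{k+1}=\max\{T(Z_k)-\eta R_k,0\}$, and it lies in $[0,D]$ when $D\ge Z_{k+1}$. In that case the two projections agree, which closes the induction.

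With this in hand, $Z^{(D)}_k\to Z_k$ almost surely as $D\to\infty$ for each fixed $k$; in fact the sequence is eventually equal to $Z_k$. The map $z\mapsto\int_0^z m(s)\dif s$ is continuous and non-negative. Fatou's lemma therefore gives
\begin{equation*}
\bbE\lsb{\int_0^{Z_k}m}\le\liminf_{D\to\infty}\bbE\lsb{\int_0^{Z^{(D)}_k}m}\le\lrb{\frac{\sigma^2}{2}+L\sigma}\eta ,
\end{equation*}
which is the claim.

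The main obstacle is the pathwise identification in the inductive step. The key point is that the truncated chain's projection onto $[0,D]$ is inactive along the trajectory up to time $k$ once $D$ exceeds the running maximum. This relies on the truncation invariance of $T$ and on the bound $0\le T(z)\le z$. Measurability and the non-negativity needed for Fatou are routine.
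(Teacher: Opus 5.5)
Your proposal is correct and follows the paper's proof essentially step for step: truncate to $[0,D]$ with the same noise, apply Proposition~\ref{prop:technical}, note that the truncated and untruncated chains coincide up to time $k$ once the truncation level exceeds $\max_{j\le k}Z_j$ (since $T$ only looks at $[0,z]$), and conclude with Fatou's lemma. The differences are cosmetic: you use integer truncation levels, and you spell out why $\max_{j\le k}Z_j$ is finite, which is automatic because $\cI=[0,+\infty)\subseteq\bbR$ under the paper's interval convention.
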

\begin{proof}
Fix an integer $N \ge 1$. For $M >0$, let $\cI_M=[0,M]$, let $T_M\coloneq T_{\cI_M,m_{|_{\cI_M}},\eta}$, and define the truncated chain
\begin{align*}
\begin{cases}
Z^{(M)}_1\equiv0,\\
Z^{(M)}_{k+1}=\Pi_{\cI_M}\lrb{T_M(Z^{(M)}_k) - \eta R_k} & k \ge 1.
\end{cases}
\end{align*}
By Proposition~\ref{prop:technical},
\begin{equation}\label{eq:truncated-bound}
\bbE\lsb{\int_0^{Z^{(M)}_N}m(s)\dif s}
\le
\lrb{\frac{\sigma^2}{2}+L\sigma}\eta
\qquad\text{for all $M>0$.}
\end{equation}
On the event $\{ \max_{1 \le k \le N} Z_k < M\}$, the truncated and untruncated chains coincide up to time $N$. Indeed, by induction, whenever $Z^{(M)}_k=Z_k<M$, we have $T_M(Z_k)=T_{\cI,m,\eta}(Z_k)$; moreover, since the untruncated next state is still below $M$, the projections onto $[0,M]$ and $[0,+\infty)$ agree at the next step.
Hence $Z^{(M)}_N\to Z_N$ almost surely as $M\to+\infty$.
Since $z\mapsto\int_0^z m(s)\dif s$ is non-negative, Fatou's lemma and \eqref{eq:truncated-bound} yield
\begin{equation*}
\bbE\lsb{\int_0^{Z_N}m(s)\dif s}
\le
\liminf_{M\to+\infty}
\bbE\lsb{\int_0^{Z^{(M)}_N}m(s)\dif s}
\le
\lrb{\frac{\sigma^2}{2}+L\sigma}\eta.
\end{equation*}
Since $N$ was arbitrary, the proof is complete.
\end{proof}

\noindent We can now prove a key lemma used to prove our positive result.
\begin{lemma}\label{lem:frozen}
Suppose Assumptions~\ref{ass:feasibility}, \ref{ass:convexity_lipschitzness}, \ref{ass:additive-kernel-oracle}, \ref{ass:time-homogeneous-noise}, and~\ref{ass:state-independent-noise} hold, and let $(X_k)_{k \ge 1}$ be generated by \eqref{eq:sgd}.
Assume the existence of a positive integer $k_{\star}$ and some $q\in\bbR$ such that $f(X_{k_{\star}})\le q$, deterministically.
Then, for every $k \ge k_{\star}$,
\begin{equation*}\label{eq:bounding-excess}
\bbE[(f(X_k) - q)_{+}] \le (L + \sigma)^2\cdot \eta.
\end{equation*}
\end{lemma}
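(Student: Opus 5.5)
The plan is to dominate the part of the trajectory after time $k_{\star}$ that lies outside a slightly enlarged sublevel set by the one-sided chains of Proposition~\ref{prop:technical} and Corollary~\ref{cor:technical-extended}, one on each side. Concretely, let $S_q\coloneq\{x\in\cX: f(x)\le q\}$, which is a nonempty closed interval $[a,b]\cap\cX$ since $X_{k_{\star}}\in S_q$. Enlarge it by one maximal gradient step and set $b'\coloneq b+\eta L$, $a'\coloneq a-\eta L$.

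\textbf{Right side.} Assume $b'<\sup\cX$; otherwise the right side is void. Put $\cI\coloneq[0,\sup\cX-b']$, possibly infinite, and $m(s)\coloneq D_+f(b'+s)$. Then $m$ is non-decreasing by convexity and $m\le L$ by Lipschitzness. Also $m\ge0$: $f(b)\le q$, while $f>q$ to the right of $b$ by definition of $b$, and convexity then forces $f$ to be non-decreasing on $[b,+\infty)\cap\cX$. Let $Z_k\coloneq(X_k-b')_+$. I will show the one-step domination
\begin{equation*}
Z_{k+1}\le \Pi_{\cI}\bigl(T_{\cI,m,\eta}(Z_k)-\eta W_k\bigr),
\end{equation*}
splitting into three cases.
\begin{itemize}
\item If $X_k\le b$, then $X_k-\eta G_k\le b+\eta L=b'$, since $|G_k|\le L$.
\item If $b<X_k\le b'$, then $G_k\ge D_-f(X_k)\ge0$, so $X_k-\eta G_k\le X_k\le b'$.
\item If $X_k>b'$, then $G_k\ge D_-f(X_k)=\lim_{s\uparrow Z_k}m(s)$. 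Hence $X_k-\eta G_k-b'\le\lim_{s\uparrow Z_k}(s-\eta m(s))\le T_{\cI,m,\eta}(Z_k)$.
\end{itemize}
In every case $X_k-\eta G_k-b'\le T(Z_k)$, using $T(0)=0$ in the first two cases. Because $\Pi_\cX$ is monotone and its effect on the positive part is exactly projection onto $\cI$, the displayed inequality follows.

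\textbf{Coupling with the dominating chain.} Let $\tilde Z_1\equiv0$ and $\tilde Z_{j+1}\coloneq\Pi_\cI(T(\tilde Z_j)-\eta R_j)$ with $R_j\coloneq W_{k_{\star}+j-1}$. Under Assumptions~\ref{ass:additive-kernel-oracle}, \ref{ass:time-homogeneous-noise} and~\ref{ass:state-independent-noise} the $W$'s are i.i.d.\ centered with second moment at most $\sigma^2$. We have $Z_{k_{\star}}=0=\tilde Z_1$ because $X_{k_{\star}}\in S_q$. The map $z\mapsto\Pi_\cI(T(z)-\eta r)$ is non-decreasing, so induction gives $Z_{k_{\star}+j-1}\le\tilde Z_j$. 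Proposition~\ref{prop:technical}, or Corollary~\ref{cor:technical-extended} when $\cI$ is unbounded, then yields
\begin{equation*}
\bbE\Bigl[\int_0^{Z_k}m(s)\,\dif s\Bigr]\le\Bigl(\tfrac{\sigma^2}{2}+L\sigma\Bigr)\eta .
\end{equation*}

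\textbf{Pointwise bound on the excess.} On $\{X_k>b\}$ we have $f(X_k)-q\le f(\min\{X_k,b'\})-f(b)+\int_0^{Z_k}m\le \eta L^2+\int_0^{Z_k}m$. The first term uses that $f$ is $L$-Lipschitz, $f(b)\le q$, and $b'-b=\eta L$; the second uses $f(x)-f(b')=\int_0^{x-b'}m$.

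\textbf{Left side and conclusion.} The mirror construction (reflect $x\mapsto-x$) gives a variable $Z'_k$ with the same expectation bound. On $\{a\le X_k\le b\}$ the excess $(f(X_k)-q)_+$ vanishes. The events $\{X_k>b\}$ and $\{X_k<a\}$ are disjoint, so
\begin{equation*}
(f(X_k)-q)_+\le \eta L^2+\int_0^{Z_k}m+\int_0^{Z'_k}m' .
\end{equation*}
Taking expectations gives $\eta L^2+2(\sigma^2/2+L\sigma)\eta=(L+\sigma)^2\eta$.

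The main obstacle is the one-step domination across all cases. The key points are choosing the enlargement by exactly $\eta L$, so that the step from inside the enlarged set never escapes it, and handling kinks through the left derivative, so that the update is bounded by $T$ rather than merely by $z-\eta m(z)$. Degenerate cases are routine: an empty or unbounded side, or $b'\ge\sup\cX$.
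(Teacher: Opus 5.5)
Your proposal is correct and follows essentially the same route as the paper. Both arguments enlarge $S_q$ by $\eta L$ on each side, dominate the one-sided overshoot by the monotone chain $\Pi_{\cI}(T_{\cI,m,\eta}(\cdot)-\eta W_k)$ started at $0$ at time $k_\star$, and combine the pointwise bound $(f(X_k)-q)_+\le\eta L^2+\sum_i\int_0^{Z^{(i)}_k}m^{(i)}$ with Proposition~\ref{prop:technical} or Corollary~\ref{cor:technical-extended}.

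The differences are cosmetic. You take $m(s)=D_+f(b'+s)$, which needs $D_-f(X_k)=\lim_{s\uparrow Z_k}m(s)$ and a value such as $m(D)\coloneq L$ at a finite right endpoint of $\cI$; the paper uses the left derivative, which gives $X_k-\eta G_k\le\beta+z-\eta m(z)$ directly. You also handle $X_k\le b'$ by splitting at $b$ rather than through Lemma~\ref{lem:subgradient-signs}.
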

\begin{proof}
Let
\begin{equation*}
S_q\coloneq\{x\in\cX:f(x)\le q\}.
\end{equation*}
The set $S_q$ is non-empty because $f(X_{k_{\star}})\le q$, and it is a closed convex subset of $\cX$. Set
\begin{equation*}
a\coloneq\inf\cX,
\qquad
b\coloneq\sup\cX,
\end{equation*}
where the endpoints are allowed to be infinite, and (possibly) enlarge $S_q$ to
\begin{equation*}\label{eq:enlarged}
E_q
\coloneq
\lcb{x\in\cX:\operatorname{dist}(x,S_q)\le\eta L}.
\end{equation*}
This is a non-empty closed interval in $\cX$, possibly unbounded on either side. Let
\begin{equation*}
\alpha\coloneq\inf E_q,
\qquad
\beta\coloneq\sup E_q,
\end{equation*}
so that $E_q=[\alpha,\beta]\cap\cX$, using our extended-endpoint convention for intervals. We say that the left branch is present if $\alpha>a$, and that the right branch is present if $\beta<b$. If a branch is not present, its contribution below is understood to be identically zero.

\noindent Suppose first that the left branch is present. Then $\alpha$ is finite, and we define 
\[
    D^{(1)}\coloneq\alpha-a,
\]
we note that $0 < D^{(1)} \leq +\infty$, with $D^{(1)}=+\infty$ when $a=-\infty$, and we define
\begin{equation*}
\cI^{(1)}\coloneq[0,D^{(1)}].
\end{equation*}
Let
\begin{equation*}
z^{(1)}:\cX\to\cI^{(1)},
\qquad
z^{(1)}(x)\coloneq(\alpha-x)_{+},
\end{equation*}
and
\begin{equation*}\label{eq:h-q-alpha}
h^{(1)}:\cI^{(1)}\to\bbR_{\ge 0},
\qquad
h^{(1)}(z)\coloneq f(\alpha-z)-f(\alpha).
\end{equation*}
The function $h^{(1)}$ is convex and $L$-Lipschitz by composition with the affine map $z\mapsto\alpha-z$. It is non-negative and non-decreasing because, on the left of the sublevel set $S_q$, convexity implies that all relative subgradients of $f$ are non-positive; after the change of variables $x=\alpha-z$, these slopes become non-negative slopes for $h^{(1)}$. Define $m^{(1)}(0)=0$ and, for $z>0$, let $m^{(1)}(z)$ be the left derivative of $h^{(1)}$ at $z$. Then $m^{(1)}:\cI^{(1)}\to[0,L]$ is non-decreasing and
\begin{equation*}\label{eq:m-q-alpha-left}
h^{(1)}(z)=\int_0^z m^{(1)}(s)\dif s,
\qquad z\in\cI^{(1)}.
\end{equation*}
Similarly, if the right branch is present, then $\beta$ is finite, and we define 
\[
    D^{(2)}\coloneq b - \beta,
\]
where $0 < D^{(2)} \leq +\infty$, with $D^{(2)}=+\infty$ when $b=+\infty$, and we define
\begin{equation*}
\cI^{(2)}\coloneq[0,D^{(2)}],
\end{equation*}
together with
\begin{equation*}
z^{(2)}:\cX\to\cI^{(2)},
\qquad
z^{(2)}(x)\coloneq(x-\beta)_{+},
\end{equation*}
and
\begin{equation*}\label{eq:h-q-beta}
h^{(2)}:\cI^{(2)}\to\bbR_{\ge 0},
\qquad
h^{(2)}(z)\coloneq f(\beta+z)-f(\beta).
\end{equation*}
Again, $h^{(2)}$ is convex and $L$-Lipschitz. It is non-negative and non-decreasing because, on the right of $S_q$, all relative subgradients of $f$ are non-negative. Define $m^{(2)}(0)=0$ and, for $z>0$, let $m^{(2)}(z)$ be the left derivative of $h^{(2)}$ at $z$. Then $m^{(2)}:\cI^{(2)}\to[0,L]$ is non-decreasing and
\begin{equation*}\label{eq:m-q-alpha-right}
h^{(2)}(z)=\int_0^z m^{(2)}(s)\dif s,
\qquad z\in\cI^{(2)}.
\end{equation*}

\noindent For each present branch $i\in\{1,2\}$, write $T^{(i)}=T_{\cI^{(i)},m^{(i)},\eta}$ and define $(Z_k^{(i)})_{k \in \bbN}$ by
\begin{align}\label{eq:branch-chain}
\begin{cases}
Z_k^{(i)}=0 & 1\le k\le k_{\star},\\
Z_{k+1}^{(i)}=\Pi_{\cI^{(i)}}\lrb{T^{(i)}(Z_k^{(i)})-(-1)^i\eta W_k} & k\ge k_{\star}.
\end{cases}
\end{align}

\begin{claim}\label{claim:sandwich} 
For every present branch $i\in\{1,2\}$, a.s.,
\begin{equation*}\label{eq:stochastic-bound}
z^{(i)}(X_k)\le Z_k^{(i)}
\qquad\text{for all $k\ge k_{\star}$.}
\end{equation*}
\end{claim}
\noindent Claim~\ref{claim:sandwich} is proved in Section~\ref{sec:claim-proofs}. We now prove the deterministic decomposition
\begin{equation}\label{eq:bound-a-b-pointwise}
(f(x)-q)_{+}
\le
\eta L^2
+
\sum_{i\in\{1,2\}:\,\text{branch }i\text{ present}}
h^{(i)}\lrb{z^{(i)}(x)},
\qquad x\in\cX.
\end{equation}
If $x\in E_q$, then $\operatorname{dist}(x,S_q)\le\eta L$, hence by Lipschitzness $(f(x)-q)_{+}\le\eta L^2$. If $x<\alpha$, then the left branch is present. Since $\alpha\in E_q$, there exists $y\in S_q$ such that $|y-\alpha|\le\eta L$, and therefore
\begin{equation*}
(f(x)-q)_{+}
\le f(x)-f(y)
\le f(x)-f(\alpha)+L|y-\alpha|
\le h^{(1)}(z^{(1)}(x)) + \eta L^2.
\end{equation*}
The case $x>\beta$ is symmetric and gives
\begin{equation*}
(f(x)-q)_{+}
\le h^{(2)}(z^{(2)}(x)) + \eta L^2.
\end{equation*}
Since every $x\in\cX$ belongs to exactly one of these three regions, \eqref{eq:bound-a-b-pointwise} follows.

\noindent Let $k\ge k_{\star}$. By \eqref{eq:bound-a-b-pointwise}, Claim~\ref{claim:sandwich}, and monotonicity of the functions $h^{(i)}$,
\begin{equation*}
\bbE[(f(X_k)-q)_{+}]
\le
\eta L^2
+
\sum_{i\in\{1,2\}:\,\text{branch }i\text{ present}}
\bbE\lsb{h^{(i)}(Z_k^{(i)})}.
\end{equation*}
For a present branch $i$, set $\widetilde Z_h\coloneq Z_{k_\star+h-1}^{(i)}$ and $R_h\coloneq(-1)^iW_{k_{\star}+h-1}$.
Then $(R_h)_{h \ge 1}$ is i.i.d., centered, and satisfies $\bbE[R_h^2]\le\sigma^2$. If $D^{(i)}<+\infty$, Proposition~\ref{prop:technical} applies to $(\widetilde Z_h)_{h\ge1}$. If $D^{(i)}=+\infty$, Corollary~\ref{cor:technical-extended} applies instead. Therefore, in either case,
\begin{equation*}
\bbE\lsb{h^{(i)}(Z_k^{(i)})}
=
\bbE\lsb{\int_0^{Z_k^{(i)}}m^{(i)}(s)\dif s}
\le
\lrb{\frac{\sigma^2}{2}+L\sigma}\eta.
\end{equation*}
There are at most two present branches, and therefore
\begin{equation*}
\bbE[(f(X_k)-q)_{+}]
\le
\eta L^2+2\lrb{\frac{\sigma^2}{2}+L\sigma}\eta
=(L+\sigma)^2\eta. \qedhere
\end{equation*}
\end{proof}
\noindent Now we have all the ingredients to complete the proof of Theorem~\ref{thm:bound_fh_simpler}.
Let $x_{\star}\in\cX_{\star}$ be such that $|x_1-x_{\star}|=\operatorname{dist}(x_1,\cX_{\star})$. For every $k \ge 1$, by non-expansiveness of the projection onto $\cX$,
\begin{align*}
\lrb{X_{k+1} - x_{\star}}^2 &= \lrb{\Pi_{\cX} (X_k - \eta\cdot (W_k + G_k)) - \Pi_{\cX}(x_{\star})}^2
\le \lrb{X_k - \eta\cdot (W_k + G_k) - x_{\star}}^2 .
\end{align*} 
By Assumptions~\ref{ass:additive-kernel-oracle}, \ref{ass:time-homogeneous-noise}, and~\ref{ass:state-independent-noise}, there exists a centered law $\mu$ with second moment at most $\sigma^2$ such that $\Pb(W_k\in A\mid\cF_k)=\mu(A)$ for every $k$ and every Borel set $A$. Since $G_k$ and $X_k$ are $\cF_k$-measurable, it follows that
\begin{align*}
\bbE[\lrb{X_{k+1} - x_{\star}}^2]
&\le
\bbE[\lrb{X_k - \eta\cdot (W_k + G_k) - x_{\star}}^2]\\
&\le\bbE[\lrb{X_k - \eta\cdot G_k - x_{\star}}^2]+\sigma^2\eta^2\\
&= \bbE[\lrb{X_k - x_{\star}}^2] - 2\bbE[G_k(X_k - x_{\star})] \cdot \eta + \lrb{ \bbE[G_k^2] + \sigma^2 }\cdot \eta^2\\
&\le \bbE[\lrb{X_k - x_{\star}}^2] - 2\bbE[G_k(X_k - x_{\star})] \cdot \eta + ( L^2 + \sigma^2 )\cdot \eta^2.
\end{align*}
The convexity of $f$ entails that $G_k(X_k - x_{\star}) \ge f(X_k) - f(x_{\star}) = \Delta(X_k)$. Hence
\begin{equation}\label{eq:telescopic}
2 \eta \cdot \bbE[\Delta(X_k)] \le \bbE[\lrb{X_k - x_{\star}}^2] - \bbE[\lrb{X_{k + 1} - x_{\star}}^2] + (L^2 + \sigma^2 )\cdot \eta^2.
\end{equation}
We sum \eqref{eq:telescopic} for $k=1,2,\dots,n$ to get%
\footnote{Note that here we assume the existence of $X_{n + 1}$, despite $X_n$ being called the \emph{last} iterate. The underlying tacit assumption, that was evident also in \eqref{eq:sgd}, is that the last iterate is called last in the sense that it is the last that we want to compute in implementations, but we assume that the recurrence defines the iterates for every $k \ge 1$.}
\begin{align*}
2 \eta \cdot \sum_{k = 1}^{n} \bbE[\Delta(X_k)]
&\le |x_1-x_{\star}|^2 - \bbE[\lrb{X_{n+1} - x_{\star}}^2] + n \cdot (L^2 + \sigma^2 ) \cdot \eta^2\\
&\le \operatorname{dist}(x_1,\cX_{\star})^2 + n \cdot (L^2 + \sigma^2 )\cdot \eta^2 .
\end{align*}
Therefore, there exists an index $k_{\star}\in\{1,\dots,n\}$ such that
\begin{equation}\label{eq:bound_k_star}
\bbE[\Delta(X_{k_{\star}})]
\le 
\frac{\operatorname{dist}(x_1,\cX_{\star})^2}{2 \eta n}
+
\frac{L^2 + \sigma^2}{2}\eta.
\end{equation}
By \eqref{eq:bound_k_star}, for every $k_{\star}\le k\le n$,
\begin{align*}
\bbE[\Delta(X_k)]
&= \bbE[\Delta(X_{k_{\star}})] + \bbE[\Delta(X_k) - \Delta(X_{k_{\star}})]\\
&\le \bbE[\Delta(X_{k_{\star}})] + \bbE[(\Delta(X_k) - \Delta(X_{k_{\star}}))_{+}]\\
&\le \frac{\operatorname{dist}(x_1,\cX_{\star})^2}{2 \eta n} + \frac{L^2 + \sigma^2 }{2}\eta + \bbE[(\Delta(X_k) - \Delta(X_{k_{\star}}))_{+}]
\\
&= \frac{\operatorname{dist}(x_1,\cX_{\star})^2}{2 \eta n} + \frac{L^2 + \sigma^2 }{2}\eta + \bbE[(f(X_k) - f(X_{k_{\star}}))_{+}] \eqcolon (\star)
\end{align*}
What is left is bounding $\bbE[(f(X_k) - f(X_{k_{\star}}))_{+}]$. To this end, we make the following claim.
\begin{claim}\label{claim:freezing}    
For every $k_{\star}\le k\le n$,
\begin{equation}\label{eq:post-good-iterate-deterioration}
\bbE\left[
(f(X_k)-f(X_{k_{\star}}))_{+}
\mid \cF_{k_{\star}}
\right]
\le
(L+\sigma)^2\eta
\qquad\text{a.s.}
\end{equation}
\end{claim}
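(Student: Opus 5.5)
The plan is to reduce Claim~\ref{claim:freezing} to Lemma~\ref{lem:frozen} by conditioning on $\cF_{k_{\star}}$ and freezing the value $q \coloneq f(X_{k_{\star}})$. The index $k_{\star}$ is a deterministic constant, as remarked after \eqref{eq:bound_k_star}, so $\cF_{k_{\star}}$ is a fixed $\sigma$-field. Fix a regular version of the conditional law of the future given $\cF_{k_{\star}}$. Equivalently, work on the realization $X_{k_{\star}} = x$ and treat the process $(X_{k_{\star}+h})_{h\ge 0}$ as a new SsGM run started deterministically at $x$.

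The key step is to justify that this restarted process satisfies all hypotheses of Lemma~\ref{lem:frozen}. First, Assumptions~\ref{ass:additive-kernel-oracle}, \ref{ass:time-homogeneous-noise} and~\ref{ass:state-independent-noise} give $\Pb(W_k\in A\mid\cF_k)=\mu(A)$ for every $k$. Iterating this, $(W_k)_{k\ge k_{\star}}$ is i.i.d.\ with law $\mu$ and independent of $\cF_{k_{\star}}$. Second, the subgradients $G_k$ are $\cF_k$-measurable, so for $k \ge k_{\star}$ we can write $G_k = \gamma_k(\omega_{\le k_{\star}}, W_{k_{\star}},\dots,W_{k-1})$, where $\omega_{\le k_{\star}}$ denotes the $\cF_{k_{\star}}$-information. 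Freezing the first argument therefore yields a predictable selection for the restarted run. Hence, conditionally on $\cF_{k_{\star}}$, the shifted sequence is an SsGM trajectory with deterministic start $x$ and i.i.d.\ noise. It satisfies $f(\text{start})\le q$ deterministically with $q = f(x)$.

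Applying Lemma~\ref{lem:frozen} with this $q$ and with the starting index shifted to $1$ gives $\bbE[(f(X_k)-q)_+\mid \cF_{k_{\star}}]\le (L+\sigma)^2\eta$. Substituting $q=f(X_{k_{\star}})$ yields \eqref{eq:post-good-iterate-deterioration} almost surely. Alternatively, I could avoid the restart formalism and rerun the proof of Lemma~\ref{lem:frozen} directly with $\cF_{k_{\star}}$-measurable random $q$, $\alpha$, $\beta$, $m^{(i)}$. Claim~\ref{claim:sandwich} is pathwise, so it still holds, and Proposition~\ref{prop:technical} and Corollary~\ref{cor:technical-extended} are then applied conditionally, since the driving noise $(W_k)_{k\ge k_{\star}}$ is independent of $\cF_{k_{\star}}$.

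The main obstacle is measure-theoretic: making the conditioning rigorous. This requires the existence of regular conditional distributions on the Polish path space, the measurability of $x\mapsto$ the bound from Lemma~\ref{lem:frozen}, and the independence of future noise from $\cF_{k_{\star}}$. I would handle it through the second route. The Markov-chain bound is applied for each fixed value of the $\cF_{k_{\star}}$-measurable data, using independence and a Fubini/freezing lemma: for an independent vector $V$ and $\cG$-measurable $Y$, $\bbE[\phi(Y,V)\mid\cG]=\Phi(Y)$ with $\Phi(y)=\bbE[\phi(y,V)]$. This gives the claim, and taking expectations then bounds $(\star)$ for $k=n$ by \eqref{eq:sgd-bound-eta}.
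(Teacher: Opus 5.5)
Your proposal is correct, and its core idea is the paper's: condition on $\cF_{k_\star}$, use that the future noises are independent of $\cF_{k_\star}$ with law $\mu^{\otimes h}$, and freeze the level $q=f(X_{k_\star})$.

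Your first route is exactly what the paper does. It writes $G_m=\gamma_m(X_1,W_1,\dots,W_{m-1})$ via Doob's functional representation, builds the whole frozen recursion $x_m(v,u)$ with frozen selections, and quotes Lemma~\ref{lem:frozen} for the restarted process. The step you pass over there is that $\gamma_m(v,u)\in\partial_{\cX}f(x_m(v,u))$ holds only almost surely under the joint law. The paper therefore needs a Fubini argument to obtain valid selections for $\law(V)$-a.e.\ frozen past $v$.

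The route you actually commit to freezes at a different level, and this sidesteps that issue. Claim~\ref{claim:sandwich} and \eqref{eq:bound-a-b-pointwise} hold pathwise on the original space with the $\cF_{k_\star}$-measurable level $q=f(X_{k_\star})$, and $X_{k_\star}\in S_q$ trivially. The dominating term $h^{(i)}(Z^{(i)}_k)$ depends on the past only through $X_{k_\star}$ (via $\alpha$, $\beta$, $m^{(i)}$, $D^{(i)}$) and on the future noises, never on the $G$'s. So the freezing identity is applied with $Y=X_{k_\star}$ to a nonnegative integrand. For each fixed $x$, the resulting integral is exactly the quantity bounded by Proposition~\ref{prop:technical} or Corollary~\ref{cor:technical-extended}, with $R=(-1)^iW$. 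No regular conditional laws and no representation of $G_m$ are needed.

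The price is twofold. First, you unpack Lemma~\ref{lem:frozen} rather than quote it. Second, what the freezing identity actually requires is joint Borel measurability of $(x,u)\mapsto h^{(i)}_x\bigl(Z^{(i)}_k(x,u)\bigr)$, not measurability of the bound, which is a constant. This is routine, since $\alpha$ and $\beta$ are monotone functions of $q$ and $T^{(i)}$ is continuous in $z$.
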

The insight into Claim~\ref{claim:freezing} is that $X_{k_{\star}}$ is $\cF_{{k_{\star}}}$-measurable and that there is independence between the  $\sigma$-fields $\cF_{{k_{\star}}}$ and $\sigma(W_{k_{\star}}, W_{k_{\star} + 1}, \dots, W_{k - 1})$, hence $X_{k_{\star}}$ can be treated as a constant when taking the conditional expectation of $(f(X_k) - f(X_{k_{\star}}))_{+}$, with respect to $\cF_{{k_{\star}}}$. This justifies a conditional application of  Lemma~\ref{lem:frozen} with $q = f(X_{k_{\star}})$, which gives \eqref{eq:post-good-iterate-deterioration}. The interested reader can find a complete proof of this claim in Section~\ref{sec:claim-proofs}.

\noindent An application of Claim~\ref{claim:freezing} and the tower property of expectation yields
\begin{align*}
(\star)
&= \frac{\operatorname{dist}(x_1,\cX_{\star})^2}{2 \eta n} + \frac{L^2 + \sigma^2 }{2}\eta + \bbE[\bbE[(f(X_k) - f(X_{k_{\star}}))_{+} | \cF_{k_{\star}}]]\\
&\le \frac{\operatorname{dist}(x_1,\cX_{\star})^2}{2 \eta n} + \frac{L^2 + \sigma^2 }{2}\eta + (L + \sigma)^2 \eta.
\end{align*}
Taking $k=n$ gives \eqref{eq:sgd-bound-eta}. Substituting $\eta=c_n/\sqrt n$, with $c_n\in[\underline c,\overline c]$, then \eqref{eq:sgd-bound-theta} follows immediately from \eqref{eq:sgd-bound-eta}. This completes the proof.

\section{Proof of Theorem~\ref{thm:clock-counterexamples}}\label{sec:displacement-lower-bounds}
Fix constants $0<\underline c\le\overline c<\infty$, a horizon $n$, and a coefficient $c_n\in[\underline c,\overline c]$. Set
\begin{align}
\eta &\coloneq \frac{c_n}{\sqrt n},
&
M&\coloneq\left\lfloor\frac{1}{16\eta}\right\rfloor,
\label{eq:clock-common-parameters}
&
\lambda&\coloneq\min\left\{\eta,\frac{1}{n\eta}\right\}.\notag
\end{align}
In the proof we take $n_0=n_0(\underline c,\overline c)$ large enough so that, for all $n\ge n_0$ and all $c_n\in[\underline c,\overline c]$,
\begin{equation}\label{eq:theta-lb-size-conditions}
\eta\le 2^{-8},
\qquad
\eta\ge\frac1n,
\qquad
\log\frac{\sqrt n}{\overline c}\ge\frac14\log n.
\end{equation}
For example, it is enough to take $n_0$ larger than a numerical constant times $\max\{\overline c^2,\underline c^{-2},\overline c^4,1\}$.
Then $M\ge16$ and $M\le n-1$. Moreover,
\begin{equation}\label{eq:lambda-delta-basic}
\lambda\le\eta,
\qquad
\lambda^2\le\frac1n,
\qquad
\eta \lambda\le\eta,
\qquad
n\eta \lambda\le1.
\end{equation}
Indeed, $\lambda\le\eta$ by definition, while $\lambda\le1/\sqrt n$ follows from $\lambda=\min\{\eta,1/(n\eta)\}$.
Also, if $\lambda=\eta$, then $n\eta \lambda=n\eta^2\le1$, whereas if $\lambda=1/(n\eta)$, then $n\eta \lambda=1$.

Recall from the statement of Theorem~\ref{thm:clock-counterexamples} that:
\begin{equation}\label{eq:clock-common-objective}
\cX=[-2,1],
\qquad
f(x)=x_{+}=\max\{x,0\},
\qquad
f_\star=0,
\qquad
\cX_\star=[-2, 0],
\end{equation}
and
\begin{equation}\label{eq:clock-common-g-selection}
g(x)=
\begin{cases}
0, & x \le 0,\\
1, & x>0.
\end{cases}
\end{equation}
Then $f$ is convex and $1$-Lipschitz on $\cX$, and $g(x)\in\partial_{\cX}f(x)\cap[-1,1]$ for every $x\in\cX$.

The next lemma reveals the harmonic structure emerging from our lower bound instance which leads to the logarithm term appearing in the lower bound.
\begin{lemma}\label{lem:clock-template}
Let $Y$ be a non-negative random variable. Let $\eta\in(0,2^{-8}]$ and let $M\coloneq\lfloor1/(16\eta)\rfloor$.
Suppose that there exist pairwise disjoint events $E_1,\ldots,E_M$ and constants $p,\ell>0$ such that, for every $j = 1,\ldots,M$,
\begin{equation*}
\Pb(E_j)\ge \frac{p}{j^2},
\qquad
Y\ge \ell\eta j\quad\text{on }E_j.
\end{equation*}
Then
\begin{equation*}
\bbE[Y]\ge \frac{p\ell}{2}\,\eta\log\frac1\eta.
\end{equation*}
\end{lemma}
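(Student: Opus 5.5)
Since $Y\ge 0$ and the events $E_1,\dots,E_M$ are pairwise disjoint, we have $Y\ge\sum_{j=1}^M Y\,\bbI_{E_j}\ge \sum_{j=1}^M \ell\eta j\,\bbI_{E_j}$ pointwise. Taking expectations and using $\Pb(E_j)\ge p/j^2$ gives
\begin{equation*}
\bbE[Y]\ge \sum_{j=1}^M \ell\eta j\cdot\frac{p}{j^2}=p\ell\eta\sum_{j=1}^M\frac1j\ge p\ell\eta\log(M+1),
\end{equation*}
where the last step is the standard bound $\sum_{j=1}^M 1/j\ge\int_1^{M+1}\dif s/s=\log(M+1)$. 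Disjointness is exactly what makes the indicator sum a valid pointwise lower bound.

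It remains to show $\log(M+1)\ge\frac12\log(1/\eta)$, i.e.\ $M+1\ge\eta^{-1/2}$. By the definition of the floor, $M+1> 1/(16\eta)$. The condition $1/(16\eta)\ge\eta^{-1/2}$ is equivalent to $\eta^{-1/2}\ge16$, i.e.\ $\eta\le 2^{-8}$, which holds by assumption. Thus $M+1\ge \eta^{-1/2}$, so $\log(M+1)\ge\frac12\log\frac1\eta$, and combining with the display yields $\bbE[Y]\ge\frac{p\ell}{2}\eta\log\frac1\eta$.

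There is no real obstacle. The only point requiring care is the choice of the harmonic lower bound: one should use $\log(M+1)$ rather than $\log M$, because $M=\lfloor 1/(16\eta)\rfloor$ may fall just below $1/(16\eta)$. This is precisely where the threshold $\eta\le2^{-8}$ enters, since it guarantees $1/(16\eta)\ge\eta^{-1/2}$ and hence that the factor $1/16$ costs at most half of the logarithm.
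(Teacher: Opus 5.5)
Your proposal is correct and follows essentially the same route as the paper. You lower-bound $\bbE[Y]$ by $p\ell\eta\sum_{j\le M}1/j \ge p\ell\eta\log(M+1)$, use $M+1>1/(16\eta)$, and note that $\eta\le2^{-8}$ lets the factor $1/16$ cost at most half the logarithm. Your condition $1/(16\eta)\ge\eta^{-1/2}$ is just a rewriting of the paper's inequality $\log\frac1\eta-\log16\ge\frac12\log\frac1\eta$.
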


\begin{proof}
By disjointness and non-negativity,
\begin{equation*}
\bbE[Y]
\ge
\sum_{j = 1}^{M}\ell\eta j\,\Pb(E_j)
\ge
p\ell\eta \sum_{j = 1}^{M}\frac{1}{j}.
\end{equation*}
Since $\sum_{j = 1}^{M}j^{-1} \ge \log(M+1)$ and $M+1>1/(16\eta)$,
\begin{equation*}
\sum_{j = 1}^{M}\frac{1}{j}
\ge
\log\frac1{16\eta}
=
\log\frac1\eta-\log16.
\end{equation*}
For $\eta\le2^{-8}$, the last display is at least $\frac12\log(1/\eta)$, and the claim follows.
\end{proof}

The following lemma explains why a rare large outcome of magnitude proportional to $j$, sampled when $j$ steps remain, leaves terminal error of order $\eta j$.
\begin{lemma}\label{lem:clock-payoff}
Fix $j \in \{1,\ldots,M\}$ and put $k_j\coloneq n-j$. Suppose that, on some realization,
\begin{equation*}
-(j+1)\eta \lambda \le X_{k_j} \le 0,
\qquad
W_{k_j}=-4j,
\qquad
W_k=0\quad\text{for every }k = k_j+1,\ldots,n-1.
\end{equation*}
Then, on the same realization,
\begin{equation*}
f(X_n)-f_\star \ge 2\eta j.
\end{equation*}
\end{lemma}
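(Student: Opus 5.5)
The argument is a direct deterministic computation along the given realization, tracking the iterates from time $k_j$ to time $n$.

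\emph{The kick step.} Since $X_{k_j}\le 0$, we have $G_{k_j}=g(X_{k_j})=0$, so
\[
X_{k_j+1}=\Pi_{\cX}(X_{k_j}+4\eta j).
\]
From $X_{k_j}\ge -(j+1)\eta\lambda$ and $\lambda\le\eta\le 2^{-8}$ we get $(j+1)\eta\lambda\le (j+1)\eta^2$. This is tiny compared with $4\eta j$, since $(j+1)\eta\le 2j\eta$ and $\eta\le 2^{-8}$. Hence
\[
X_{k_j}+4\eta j\ge 4\eta j-2j\eta^2\ge 3\eta j>0 .
\]
On the upper side, $X_{k_j}+4\eta j\le 4\eta M\le 1/4<1$. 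So the projection is inactive, and $X_{k_j+1}\in[3\eta j,\,1/4]$, more precisely $X_{k_j+1}\ge 4\eta j-(j+1)\eta\lambda$.

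\emph{The drift steps.} For $k=k_j+1,\dots,n-1$ the noise vanishes. While $X_k>0$ the update is
\[
X_{k+1}=\Pi_{\cX}(X_k-\eta)=X_k-\eta,
\]
where the projection is inactive because $X_k-\eta>-2$. There are $(n-1)-(k_j+1)+1=j-1$ such steps. By induction on $i=0,\dots,j-1$,
\[
X_{k_j+1+i}=X_{k_j+1}-i\eta\ge 3\eta j-i\eta>0,
\]
so positivity is maintained throughout and the formula $g=1$ applies at every step. Therefore
\[
X_n=X_{k_j+1}-(j-1)\eta\ge 4\eta j-(j+1)\eta\lambda-(j-1)\eta .
\]
Using $(j+1)\eta\lambda\le 2j\eta^2\le \eta j/2$, this gives $X_n\ge 2\eta j$ with room to spare. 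Then $f(X_n)-f_\star=X_n\ge 2\eta j$, since $f_\star=0$.

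\emph{Main obstacle.} The only delicate point is bookkeeping. One must get the number of post-kick steps right, namely $j-1$ with $k_j=n-j$. One must also check that the iterate stays strictly positive during the drift phase, so that $g=1$ throughout, and that the projection onto $[-2,1]$ never binds. Both follow from $4\eta M\le 1/4$ and $\eta\le 2^{-8}$, which are guaranteed by the choice of $M$ in \eqref{eq:clock-common-parameters} and by \eqref{eq:theta-lb-size-conditions}.
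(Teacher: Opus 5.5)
Your proof is correct and follows the paper's argument. The two steps are the same: the kick at time $k_j$ with an inactive projection, then $j-1$ noiseless steps that each lower the iterate by at most $\eta$. The only cosmetic difference is in the drift phase. You verify positivity throughout, which gives the exact value $X_n=X_{k_j+1}-(j-1)\eta$, and you use $\lambda\le\eta\le 2^{-8}$ for a slightly sharper final estimate. The paper instead uses only $g\in[0,1]$, the fact that projection onto $[-2,1]$ cannot lower points at most $1$, and the cruder bound $\eta\lambda\le\eta$.
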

\begin{proof}
Since $X_{k_j} \le 0$, we have $g(X_{k_j})=0$. Hence $W_{k_j}=-4j$ gives
\begin{equation*}
X_{k_j+1}
=
\Pi_\cX\bigl(X_{k_j}-\eta W_{k_j}\bigr)
=
\Pi_\cX\bigl(X_{k_j}+4\eta j\bigr).
\end{equation*}
Because $X_{k_j}\in[-(j+1)\eta \lambda, 0]$, $\eta \lambda\le\eta$, and $j\le M\le1/(16\eta)$,
\begin{equation*}
0< 4\eta j-(j+1)\eta \lambda
\le
X_{k_j}+4\eta j
\le
4\eta j
\le
\frac{1}{4}.
\end{equation*}
Thus the projection does not act and
\begin{equation*}
X_{k_j+1} \ge 4\eta j-(j+1)\eta \lambda>0.
\end{equation*}
By hypothesis, for the remaining $j-1$ updates the noises vanish.
At each such update, the deterministic term can decrease the iterate by at most $\eta$, because $g(x)\in[0,1]$.
Since $X_{k_j+1}\le 1/4$ and the subsequent noiseless update $x\mapsto x-\eta g(x)$ never increases the pre-projection point, all subsequent pre-projection points are at most $1$.
Hence the projection onto $[-2,1]$ cannot decrease them.
Therefore
\begin{align*}
X_n
&\ge
4\eta j-(j+1)\eta \lambda-(j-1)\eta
\ge
4\eta j-(j+1)\eta-(j-1)\eta
=
2\eta j.
\end{align*}
In particular $X_n>0$, and so $f(X_n)=X_n$.
\end{proof}

\subsection{A time-homogeneous state-dependent implementation}\label{subsec:state-dependent-clock}
This subsection proves item~\ref{it:state-dependent-lb} of Theorem~\ref{thm:clock-counterexamples}.

\noindent For $j = 0,1,\dots,n-1$, define the labeled states
\begin{equation*}
s_j \coloneq -(j+1)\eta \lambda.
\end{equation*}
Set $X_1\equiv s_{n-1}=-n\eta \lambda$. By \eqref{eq:lambda-delta-basic}, $X_1\in[-1,0]\subseteq\cX_\star$. Thus $s_0=-\eta \lambda$.
We define the noise law at state $s_j$ depending on whether $s_j$ is near or far from $0$.

\paragraph{Far labeled states.}
For $j = M+1,\dots,n-1$, define
\begin{equation*}\label{eq:sd-far-kernel}
Q(s_j,\cdot)
=
(1-\lambda^2)\delta_{-\lambda}
+
\lambda^2\delta_{\frac{1-\lambda^2}{\lambda}}.
\end{equation*}
The noise realization $-\lambda$ moves the point from $s_j$ to $s_{j-1}$, because
\begin{equation*}
    s_j-\eta(-\lambda)
=
    -(j+1)\eta \lambda+\eta\lambda
=
    -j\eta \lambda
=
    s_{j-1}.
\end{equation*}

\paragraph{Near labeled states.}
For $j = 1,\dots,M$, set
\begin{equation*}\label{eq:sd-near-parameters}
a_j = \frac{1}{64j^2},
\qquad
b_j = \lambda^2+\frac{1}{64j^2}.
\end{equation*}
Since $a_j+b_j\le\lambda^2+\frac{1}{32}<1$, define
\begin{equation*}\label{eq:sd-near-kernel}
Q(s_j,\cdot)
=
(1-a_j-b_j)\delta_{-\lambda}
+
a_j\delta_{-4j}
+
b_j\delta_{R_j},
\end{equation*}
where $R_j>0$ is chosen to make the noise centered:
\begin{equation}\label{eq:sd-Rj}
R_j
=
\frac{(1-a_j-b_j)\lambda+4ja_j}{b_j}
=
\frac{(1-a_j-b_j)\lambda+\frac{1}{16j}}
{\lambda^2+\frac{1}{64j^2}}.
\end{equation}
The rare large outcome associated with label $j$ is $-4j$. It is assigned probability mass $1/(64j^2)$ by the state-dependent law at $s_j$, and it moves the iterate to the right by $4\eta j$.
At all other points $x\in\cX\setminus\{s_1,\dots,s_{n-1}\}$, set $Q(x,\cdot)=\delta_0$.

We construct the process recursively as follows. Given $\cF_k$, sample $W_k$ from $Q(X_k,\cdot)$ independently of the past, set $G_k\coloneq g(X_k)$, and update $X_{k+1}$ by \eqref{eq:sgd}.
\begin{lemma}\label{lem:sd-oracle-properties}
The kernel $Q$ is centered and has uniformly bounded second moment:
\begin{equation}\label{eq:sd-moment-assumptions}
\int_{\bbR} w\,Q(x,\dif w)=0,
\qquad
\int_{\bbR} w^2\,Q(x,\dif w)\le3,
\qquad
x\in\cX.
\end{equation}
Consequently, the oracle satisfies Assumptions~\ref{ass:additive-kernel-oracle} and~\ref{ass:time-homogeneous-noise} with $\sigma^2=3$, and hence also Assumption~\ref{ass:centered_additive_l2_noise}.
Furthermore, it is not state-independent.
\end{lemma}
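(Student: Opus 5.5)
Verifying the lemma is a case-by-case check over the three kinds of states on which $Q$ is defined: the far labeled states $s_j$ with $M<j\le n-1$, the near labeled states $s_j$ with $1\le j\le M$, and all remaining points, where $Q(x,\cdot)=\delta_0$. The remaining points are trivial, with mean and second moment both $0$. First, though, I will record that $Q$ really is a probability kernel. The states $s_j=-(j+1)\eta\lambda$ are pairwise distinct, so $Q(x,\cdot)$ is well defined. On the finite set $\{s_1,\dots,s_{n-1}\}$ it is given by finitely many explicit measures, and it is constant off that set, so $x\mapsto Q(x,A)$ is Borel measurable. The weights are nonnegative: $a_j+b_j\le\lambda^2+\tfrac1{32}<1$ because $\lambda^2\le 1/n$ by \eqref{eq:lambda-delta-basic}. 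We also have $R_j>0$.

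For the far states I will compute the mean directly: $(1-\lambda^2)(-\lambda)+\lambda^2\cdot\frac{1-\lambda^2}{\lambda}=0$. The second moment is
\[
(1-\lambda^2)\lambda^2+\lambda^2\frac{(1-\lambda^2)^2}{\lambda^2}=(1-\lambda^2)\bigl(\lambda^2+1-\lambda^2\bigr)=1-\lambda^2\le1 .
\]
For the near states, centering holds by construction of $R_j$ in \eqref{eq:sd-Rj}, since $b_jR_j=(1-a_j-b_j)\lambda+4ja_j$ exactly cancels the negative mass. For the second moment, the term $(1-a_j-b_j)\lambda^2$ is at most $\lambda^2\le1$, and the term $a_j\cdot16j^2$ equals $1/4$.

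The only real estimate is $b_jR_j^2=\bigl((1-a_j-b_j)\lambda+\tfrac1{16j}\bigr)^2/b_j$. Using $(u+v)^2\le 2u^2+2v^2$, I bound it by
\[
\frac{2\lambda^2}{\lambda^2+\frac1{64j^2}}+\frac{2\cdot\frac1{256j^2}}{\lambda^2+\frac1{64j^2}}\le 2\cdot\frac{\lambda^2}{\lambda^2+\frac1{64j^2}}+2\cdot\frac{64}{256}\cdot\frac{\frac1{64j^2}}{\lambda^2+\frac1{64j^2}} .
\]
Both fractions are at most $1$, so this is at most $2\max\{1,\tfrac14\}=2$. A sharper approach would write it as a convex combination of $2$ and $\tfrac12$. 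Either way the total is at most $\lambda^2+\tfrac14+2$. This is below $3$ because $\lambda^2\le1/n$ and $n\ge n_0$ is large; even the crude bound $\lambda^2\le 2^{-16}$, which follows from $\lambda\le\eta\le2^{-8}$, suffices. This term is the only step that needs care; everything else is arithmetic.

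Given \eqref{eq:sd-moment-assumptions}, Assumption~\ref{ass:additive-kernel-oracle} holds with $\sigma^2=3$ and $Q_k\equiv Q$. The construction samples $W_k$ from $Q(X_k,\cdot)$ given $\cF_k$, so $\Pb(W_k\in A\mid\cF_k)=Q(X_k,A)$. Assumption~\ref{ass:time-homogeneous-noise} holds because the kernel does not depend on $k$. Assumption~\ref{ass:centered_additive_l2_noise} then follows from the implication already shown after Assumption~\ref{ass:additive-kernel-oracle}.

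Finally, I will show that Assumption~\ref{ass:state-independent-noise} fails. If it held, then, since $Q_k=Q$ for all $k$, $Q(x,\cdot)$ would have to be the same measure for every $x\in\cX$. But $Q(s_1,\cdot)$ charges $-4$ with mass $1/64>0$, whereas $Q(x,\cdot)=\delta_0$ at any point $x\notin\{s_1,\dots,s_{n-1}\}$, for instance $x=1$. Hence the kernel is not state-independent.
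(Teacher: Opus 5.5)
Your proposal is correct and follows essentially the same route as the paper: the same three-way case split, the same exact computation at the far states, centering by construction of $R_j$ together with $(u+v)^2\le 2u^2+2v^2$ to control $b_jR_j^2$ at the near states, and a comparison of a $\delta_0$ point with a labeled state to refute state-independence. One small slip: ``both fractions are at most $1$'' alone only gives $2+\tfrac12$, which is the bound the paper uses and yields $\lambda^2+\tfrac{11}{4}$; your sharper bound $2$ needs the two fractions to sum to $1$, as in your convex-combination remark, and either way the total stays below $3$.
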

\begin{proof}
At all points where $Q(x,\cdot)=\delta_0$, the claims are immediate. For a far labeled state $s_j$, $j>M$, the mean is
\begin{equation*}
(1-\lambda^2)(-\lambda) + \lambda^2\frac{1-\lambda^2}{\lambda}=0,
\end{equation*}
and the second moment is
\begin{align*}
(1-\lambda^2)\lambda^2
+\lambda^2\left(\frac{1-\lambda^2}{\lambda}\right)^2
&=(1-\lambda^2)\lambda^2+(1-\lambda^2)^2\\
&=1-\lambda^2
 \le 1.
\end{align*}
For a near labeled state $s_j$, $j\le M$, the definition of $R_j$ gives
\begin{equation*}
(1-a_j-b_j)(-\lambda)+a_j(-4j)+b_jR_j = 0,
\end{equation*}
so the noise is centered. For the second moment, using \eqref{eq:sd-Rj},
\begin{align*}
\int_{\bbR} w^2\,Q(s_j,\dif w)
&=(1-a_j-b_j)\lambda^2+a_j(4j)^2+b_jR_j^2 \\
&\le \lambda^2+\frac{1}{4}+
\frac{\bigl((1-a_j-b_j)\lambda+\frac{1}{16j}\bigr)^2}
{\lambda^2+\frac{1}{64j^2}} \\
&\le \lambda^2+\frac{1}{4}+
2\frac{\lambda^2}{\lambda^2+\frac{1}{64j^2}}
+2\frac{\left(\frac{1}{16j}\right)^2}
{\lambda^2+\frac{1}{64j^2}} \\
&\le \lambda^2+\frac{1}{4}+2+\frac{1}{2}
=\lambda^2+\frac{11}{4}.
\end{align*}
Since $\lambda\le\eta\le2^{-8}$, this is smaller than $3$.

Finally, because $X_k$ is $\cF_k$-measurable and $W_k$ is sampled from $Q(X_k,\cdot)$ conditionally independently of the past, \eqref{eq:sd-moment-assumptions} gives
\begin{equation*}
\bbE[W_k\mid\cF_k]=0,
\qquad
\bbE[W_k^2\mid\cF_k]\le3.
\end{equation*}
Finally, this verifies Assumptions~\ref{ass:additive-kernel-oracle} and~\ref{ass:time-homogeneous-noise} with $\sigma^2=3$, and hence also Assumption~\ref{ass:centered_additive_l2_noise}.
The kernel is not state-independent: for example, $Q(0,\cdot)=\delta_0$, whereas $Q(s_{n-1},\cdot)$ assigns positive mass to nonzero noise values.
\end{proof}

\noindent If the process is at a labeled state $s_j$ with $j \ge 1$ and the sampled noise is $W_k=-\lambda$, then $X_{k+1}=s_{j-1}$. Therefore, along the path where these values are sampled before the rare large outcome,
\begin{equation*}
X_k=s_{n-k}=-(n-k+1)\eta \lambda,
\qquad k=1,2,\dots,n.
\end{equation*}
In particular, $s_j$ is visited at time $k_j = n-j$.

For $j=1,\dots,M$, define
\[
    E_j
\coloneq
    \left(\bigcap_{r=j+1}^{n-1}\{W_{n-r}=-\lambda\}\right)
    \cap
    \{W_{n-j}=-4j\}.
\]
On this event, a deterministic induction gives $X_{n-r}=s_r$ for every $r=j+1,\ldots,n-1$, and hence the event indeed corresponds to following the labeled path down to $s_j$ and then taking the rare large outcome at $s_j$.

\begin{lemma}\label{lem:sd-event-probability}
For every $j = 1,\dots,M$,
\begin{equation*}
\Pb(E_j)\ge\frac{1}{512j^2}.
\end{equation*}
Moreover, the events $E_1,\dots,E_M$ are pairwise disjoint.
\end{lemma}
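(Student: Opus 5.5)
The plan is to compute $\Pb(E_j)$ exactly via the chain rule for conditional probabilities along the labeled path, and to read disjointness off the conflicting noise values. For disjointness, take $1\le j<j'\le M$. The event $E_{j'}$ requires $W_{n-j'}=-4j'$. The event $E_j$ requires $W_{n-r}=-\lambda$ for every $r=j+1,\dots,n-1$, and since $j<j'\le M\le n-1$ this includes $r=j'$, that is, $W_{n-j'}=-\lambda$. Because $-4j'\neq-\lambda$ (indeed $\lambda\le\eta\le 2^{-8}<4$), the two events cannot occur together.

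For the probability, I would factor $\Pb(E_j)$ along the times $k=1,\dots,n-j$ using the construction, in which $W_k$ is drawn from $Q(X_k,\cdot)$ conditionally on $\cF_k$. By the deterministic induction already noted, on the event $\bigcap_{r=j+1}^{n-1}\{W_{n-r}=-\lambda\}$ restricted to the earlier indices, the state at time $k=n-r$ is $X_{k}=s_r$, which is $\cF_k$-measurable. So each conditional factor is $Q(s_r,\{-\lambda\})$ for $r=n-1,\dots,j+1$, and the final factor is $Q(s_j,\{-4j\})=a_j=1/(64j^2)$. Applying the tower property iteratively gives
\begin{equation*}
\Pb(E_j)=\frac{1}{64j^2}\prod_{r=M+1}^{n-1}(1-\lambda^2)\prod_{r=j+1}^{M}(1-a_r-b_r).
\end{equation*}
The first product uses the far-state kernel and the second uses the near-state kernel.

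Next I would bound the two products from below. For the far product, $(1-\lambda^2)^{n}\ge 1-n\lambda^2\ge 0$ is useless at the edge case $\lambda^2=1/n$, so I will use $(1-\lambda^2)^{n}\ge(1-1/n)^n\ge 1/4$ for $n\ge2$, which holds by \eqref{eq:lambda-delta-basic}. For the near product, each factor is $1-\lambda^2-\frac{1}{32r^2}$. I would use $\prod(1-x_r)\ge 1-\sum x_r$, which gives at least $1-M\lambda^2-\frac{1}{32}\sum_{r\ge2}r^{-2}$. Here $M\lambda^2\le\eta\lambda/16\le 2^{-12}$ and $\frac{1}{32}(\pi^2/6-1)<1/32$, so the near product is at least $1/2$. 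Combining the three factors gives $\Pb(E_j)\ge \frac{1}{64j^2}\cdot\frac14\cdot\frac12=\frac{1}{512j^2}$.

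The main obstacle is the rigorous justification of the product formula. The conditional law of $W_k$ given $\cF_k$ is $Q(X_k,\cdot)$, and on the partial event $X_k$ equals a deterministic state. I will handle this by induction on $k$, writing $\Pb(A_{k}\cap\{W_k=w\})=\bbE[\bbI_{A_k}Q(X_k,\{w\})]=Q(s,\{w\})\Pb(A_k)$, where $A_k\in\cF_k$ is the partial event and $X_k\equiv s$ on $A_k$. The remaining numeric bounds for the far and near products are then routine.
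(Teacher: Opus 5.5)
Your proposal is correct and follows essentially the same route as the paper: the same exact product formula (which you justify more carefully via the $\cF_k$-measurable partial events on which $X_k$ is a fixed labeled state), the bounds $(1-1/n)^n\ge 1/4$ and $\prod_r(1-x_r)\ge 1-\sum_r x_r$, and disjointness from the conflicting required values of $W_{n-j'}$. One small slip: $M\lambda^2\le\eta\lambda/16$ is false in general (it fails for $\lambda=\eta$). The correct chain $M\lambda^2\le\lambda^2/(16\eta)\le\lambda/16\le\eta/16\le 2^{-12}$ still gives the bound you actually use.
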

\begin{proof}
By construction and conditional independence,
\begin{equation}\label{eq:sd-Ej-probability}
\Pb(E_j)
=
a_j
\prod_{r=j+1}^{M}(1-a_r-b_r)
\prod_{r=M+1}^{n-1}(1-\lambda^2),
\end{equation}
with empty products interpreted as one. Since $\lambda^2\le1/n$,
\begin{equation*}
\prod_{r=M+1}^{n-1}(1-\lambda^2)
\ge(1-1/n)^n
\ge\frac{1}{4}.
\end{equation*}
For the near-labeled product, applying $\prod_i(1-u_i) \ge 1-\sum_i u_i$, valid for $u_r\in[0,1]$, to $u_r \coloneq a_r+b_r=\lambda^2+\frac{1}{32r^2} \in [0,1]$ and recalling that $\sum_{r=1}^{\infty}{r^{-2}} = \pi^{2}/6$ we have
\begin{align*}
\prod_{r=j+1}^{M}(1-a_r-b_r)
&\ge
1-\sum_{r=j+1}^{M}\left(\lambda^2+\frac{1}{32r^2}\right) \\
&\ge
1-M\lambda^2-\frac{1}{32}\sum_{r=1}^{\infty}\frac{1}{r^2} \\
&\ge
1-M\eta^2-\frac{\pi^2}{192} \\
&\ge
1-\frac{\eta}{16}-\frac{\pi^2}{192}
\ge\frac{1}{2}.
\end{align*}
Plugging these bounds and $a_j=1/(64j^2)$ into \eqref{eq:sd-Ej-probability} gives
\begin{equation*}
\Pb(E_j)\ge\frac{1}{8} \cdot \frac{1}{64 j^2}=\frac{1}{512j^2}.
\end{equation*}
We observe that, for every $1 \le j<\ell \le M$, the event $E_j$ requires the noise realization $W_k=-\lambda$ at $s_{\ell}$, while $E_{\ell}$ requires the rare large outcome $W_{k_\ell}=-4\ell$ at $s_{\ell}$. Hence the events are pairwise disjoint.
\end{proof}
\noindent Now we show that for the events $E_j$ the hypotheses of Lemma~\ref{lem:clock-payoff} hold. On $E_j$, the process reaches $s_j = -(j+1)\eta \lambda$ at time $k_j = n-j$, and $W_{k_j}=-4j$. The rare large outcome sends the iterate to a point at least $4\eta j-(j+1)\eta \lambda>0$. From then on, as long as the iterate is positive, the kernel is $\delta_0$; with zero noise, each deterministic update decreases the iterate by at most $\eta$. Hence, after any $h\le j-1$ subsequent updates, the iterate is at least $4\eta j-(j+1)\eta \lambda-h\eta$, which is positive. Therefore all subsequent noises up to time $n-1$ are zero, and the hypotheses of Lemma~\ref{lem:clock-payoff} hold.

Now we have all the ingredients to complete the proof of the time-homogeneous state-dependent case (Item~\ref{it:state-dependent-lb} of Theorem~\ref{thm:clock-counterexamples}).

Lemma~\ref{lem:sd-event-probability} and Lemma~\ref{lem:clock-payoff} verify the assumptions of Lemma~\ref{lem:clock-template} with $Y=f(X_n)-f_\star$, $p=1/512$, and $\ell=2$. Hence
\begin{equation*}
\bbE[f(X_{n})-f_{\star}]
\ge
\frac{1}{512}\,\eta\log\frac1\eta.
\end{equation*}
Since $\eta=c_n/\sqrt n$, and using that  $\underline c \le c_n \le \overline c$ and \eqref{eq:theta-lb-size-conditions},
\begin{equation*}
\eta\log\frac1\eta
=
\frac{c_n}{\sqrt n}\log\frac{\sqrt n}{c_n}
\ge
\frac{\underline c}{\sqrt n}\log\frac{\sqrt n}{\overline c}
\ge
\frac{\underline c}{4}\frac{\log n}{\sqrt n}.
\end{equation*}
This proves \eqref{eq:sd-main-lower-bound}.
Finally, the support of $Q(s_M,\cdot)$ contains the value $-4M$. Since $1/(16\eta) \ge 16$, we have $M \ge 1/(32\eta)$, and therefore
\begin{equation*}
4M\ge\frac{1}{8\eta}\ge\frac{\sqrt n}{8\overline c}.
\end{equation*}
Thus, for $x=s_M$, the support of $Q(x,\cdot)$ contains a value of magnitude at least $\sqrt n/(8\overline c)$.

\subsection{A state-independent time-inhomogeneous implementation}\label{subsec:time-inhomogeneous-clock}
This subsection proves item~\ref{it:time-inhomogeneous-lb} of Theorem~\ref{thm:clock-counterexamples}. The objective and subgradient selection are the same as in \eqref{eq:clock-common-objective}--\eqref{eq:clock-common-g-selection}, but the process starts from the optimal point $X_1\equiv0$. Here the labels $j$ are attached directly to some time indices of the iterates, thereby determining when the noise can produce a rare large outcome.

For $j = 1,\ldots,M$, define the special time
\begin{equation*}
k_j = n-j.
\end{equation*}
Let the random variables $(W_k)_{k \in \bbN}$ be independent. At a special time $k_j$, set
\begin{equation*}\label{eq:ti-noise-law}
W_{k_j}=
\begin{cases}
-4j, & \text{with probability } 1/(64j^2),\\
+4j, & \text{with probability } 1/(64j^2),\\
0, & \text{otherwise},
\end{cases}
\end{equation*}
and at every non-special time set $W_k=0$. This sequence is state-independent because the law of $W_k$ depends only on $k$; in kernel notation, set $Q_k(x,\cdot)=\law(W_k)$ for every $x\in\cX$.

The rare large outcome associated with label $j$ is again $-4j$, but now it can be sampled from the law of the time-indexed noise variable $W_{k_j}$.
\begin{lemma}\label{lem:ti-oracle-properties}
The associated kernels $Q_k(x,\cdot)=\law(W_k)$ are centered and state-independent. Moreover, the variables $(W_k)_{k\in\bbN}$ are independent across time and satisfy
\begin{equation*}
\bbE[W_k^2]\le\frac{1}{2},
\qquad k \ge 1.
\end{equation*}
In particular, the oracle satisfies Assumptions~\ref{ass:additive-kernel-oracle} and~\ref{ass:state-independent-noise} with $\sigma^2=1/2$, and hence also Assumption~\ref{ass:centered_additive_l2_noise}.
Finally, it is not time-homogeneous.
\end{lemma}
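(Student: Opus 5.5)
The plan is to verify each claim directly from the explicit definition of the laws of $(W_k)_{k\in\bbN}$, separating special times $k_j=n-j$, $j=1,\dots,M$, from non-special times. First note that the special times are well defined and pairwise distinct. We have $1\le j\le M\le n-1$, so $k_j\in\{1,\dots,n-1\}$, and $j\mapsto n-j$ is injective. Hence each $W_k$ is assigned exactly one law: either the three-point law indexed by the unique $j$ with $k=k_j$, or $\delta_0$. The $W_k$ are independent by construction. The kernel $Q_k(x,\cdot)\coloneq\law(W_k)$ does not depend on $x$, which gives Assumption~\ref{ass:state-independent-noise} with $\mu_k=\law(W_k)$.

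For the moments, at a non-special time $W_k\equiv 0$, so the mean and second moment vanish. At a special time $k_j$ the law is symmetric about $0$, so it is centered. Its second moment is
\[
2\cdot\frac{(4j)^2}{64j^2}=\frac{1}{2}.
\]
This gives $\bbE[W_k^2]\le 1/2$ for all $k$, and the kernel moment conditions of Assumption~\ref{ass:additive-kernel-oracle} hold with $\sigma^2=1/2$.

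Next we check the conditional-law identity $\Pb(W_k\in A\mid\cF_k)=Q_k(X_k,A)=\law(W_k)(A)$. We have $\cF_k=\sigma(X_1,W_1,\dots,W_{k-1})$, and $X_1\equiv 0$ is deterministic. So $\cF_k\subseteq\sigma(W_1,\dots,W_{k-1})$, which is independent of $W_k$, and the conditional law of $W_k$ given $\cF_k$ equals its unconditional law almost surely. This establishes Assumption~\ref{ass:additive-kernel-oracle}. The implication to Assumption~\ref{ass:centered_additive_l2_noise} was already shown in Section~\ref{sec:problem_setting}.

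Finally, to show failure of time-homogeneity, it suffices to exhibit two times with different noise laws. Take $k_1=n-1$, where $\law(W_{n-1})$ puts mass $1/64>0$ on $-4$, and $k=n$, which is non-special since $j\ge1$, so $\law(W_n)=\delta_0$. Since $Q_{n-1}(x,\cdot)\ne Q_n(x,\cdot)$, no single kernel $Q$ can exist. There is no real obstacle in this proof. The only point needing care is the measurability and independence step in the conditional-law identity, which relies on $X_1$ being deterministic and the $W_k$ being mutually independent.
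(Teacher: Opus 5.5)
Your proposal is correct and follows the same route as the paper. You compute the mean and second moment at special times ($2\cdot(4j)^2/(64j^2)=1/2$), use independence of $W_k$ from $\cF_k$ to identify the conditional law with $\law(W_k)$, and compare a special-time kernel with the $\delta_0$ kernel at a non-special time; your extra details (distinctness of the $k_j$ via $M\le n-1$, the explicit times $n-1$ and $n$, and $\cF_k=\sigma(W_1,\dots,W_{k-1})$ because $X_1\equiv 0$) only make explicit what the paper leaves implicit.
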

\begin{proof}
At non-special times the claim is immediate. At time $k_j$, symmetry gives $\bbE[W_{k_j}]=0$, and
\begin{equation*}
\bbE[W_{k_j}^2]
=
2\frac{1}{64j^2}(4j)^2
=
\frac{1}{2}.
\end{equation*}
Since the variables are independent and their laws are fixed in advance, for every $k$ the conditional law given $\cF_k$ is $Q_k(X_k,\cdot)=\law(W_k)$, and the conditional mean and second moment are the corresponding unconditional quantities. This verifies Assumptions~\ref{ass:additive-kernel-oracle} and~\ref{ass:state-independent-noise} with $\sigma^2=1/2$, and hence also Assumption~\ref{ass:centered_additive_l2_noise}.
The oracle is not time-homogeneous: non-special times have kernel $\delta_0$, whereas each special time $k_j$ has a kernel assigning positive mass to $\pm4j$.
\end{proof}
\noindent For $j = 1,\ldots,M$, let $F_j$ be the event that $W_{k_j}=-4j$ and $W_{k_\ell}=0$ for every $\ell\neq j$ with $1\le\ell\le M$.
\begin{lemma}\label{lem:ti-event-probability}
For every $j = 1,\ldots,M$,
\begin{equation*}
\Pb(F_j)\ge\frac{1}{128j^2}.
\end{equation*}
Moreover, the events $F_1,\ldots,F_M$ are pairwise disjoint. On $F_j$, the hypotheses of Lemma~\ref{lem:clock-payoff} hold.
\end{lemma}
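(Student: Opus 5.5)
The proof has three parts: compute $\Pb(F_j)$ from independence, read off disjointness from the definition of the events, and check the hypotheses of Lemma~\ref{lem:clock-payoff} on $F_j$ by a short deterministic induction.

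\textbf{Probability bound.} The variables $(W_k)$ are independent, and the special times $k_1,\dots,k_M$ are distinct because $M\le n-1$. Hence
\begin{equation*}
\Pb(F_j)=\frac{1}{64j^2}\prod_{\ell\neq j,\,1\le\ell\le M}\Bigl(1-\frac{1}{32\ell^2}\Bigr).
\end{equation*}
Apply $\prod_i(1-u_i)\ge 1-\sum_i u_i$, which is valid since $u_\ell=1/(32\ell^2)\in[0,1]$. This bounds the product below by $1-\frac{1}{32}\cdot\frac{\pi^2}{6}=1-\frac{\pi^2}{192}\ge\frac12$. Therefore $\Pb(F_j)\ge 1/(128j^2)$.

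\textbf{Disjointness.} Take $j\neq j'$. The event $F_j$ requires $W_{k_{j'}}=0$, while $F_{j'}$ requires $W_{k_{j'}}=-4j'\neq0$. So $F_j\cap F_{j'}=\emptyset$.

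\textbf{Hypotheses of Lemma~\ref{lem:clock-payoff} on $F_j$.} Work on $F_j$. Every time $k<k_j$ is either non-special or equal to some $k_\ell$ with $\ell>j$, since $k_\ell=n-\ell<n-j$. In both cases $W_k=0$ on $F_j$.

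Starting from $X_1=0$, I argue by induction that $X_k=0$ for all $k\le k_j$. If $X_k=0$, then $g(X_k)=0$, and with $W_k=0$ the update gives $X_{k+1}=\Pi_\cX(0)=0$. Thus $X_{k_j}=0$, which lies in $[-(j+1)\eta\lambda,0]$. By the definition of $F_j$, $W_{k_j}=-4j$.

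For $k_j<k\le n-1$, each $k$ is either non-special or equal to $k_\ell$ with $\ell<j$. Again $W_k=0$ on $F_j$.

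All hypotheses of Lemma~\ref{lem:clock-payoff} therefore hold on $F_j$. There is no real obstacle here; the only care needed is in the bookkeeping of which indices are special, and in noting that $k_j\ge1$ follows from $M\le n-1$.
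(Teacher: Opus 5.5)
Your proposal is correct and follows the paper's proof essentially step for step. It uses the same independence product bounded below by $1-\pi^2/192\ge 1/2$, the same disjointness argument via $W_{k_\ell}$, and the same deterministic observation that the iterate stays at $0$ until time $k_j$, with all other noises vanishing on $F_j$. One minor slip: the $k_j=n-j$ are distinct automatically, and $M\le n-1$ is only what makes them valid time indices $\ge 1$.
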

\begin{proof}
By independence,
\begin{equation*}
\Pb(F_j)
=
\frac{1}{64j^2}
\prod_{\ell\neq j}\left(1-\frac{1}{32\ell^2}\right),
\end{equation*}
where the product is over $\ell\in\{1,\ldots,M\}\setminus\{j\}$. Since $\frac{1}{32}\sum_{\ell \ge 1}\ell^{-2}<1/2$ and $\prod_i(1-u_i) \ge 1-\sum_i u_i$ for $u_i\in[0,1]$,
\begin{equation*}
\prod_{\ell\neq j}\left(1-\frac{1}{32\ell^2}\right)
\ge
1-\frac{1}{32}\sum_{\ell=1}^{\infty}\frac{1}{\ell^2}
\ge\frac{1}{2}.
\end{equation*}
Therefore $\Pb(F_j)\ge 1/(128j^2)$.
Pairwise disjointness follows because, if $j\neq\ell$, then $F_j$ requires $W_{k_\ell}=0$, whereas $F_\ell$ requires $W_{k_\ell}=-4\ell$.

On $F_j$, all special noises before time $k_j$ are zero, and all non-special noises are zero by construction. Since $X_1=0$ and $g(0)=0$, the process stays at $0$ until time $k_j$. At time $k_j$, $W_{k_j}=-4j$, and all later special noises are zero on $F_j$, while all later non-special noises are zero by construction. Thus the hypotheses of Lemma~\ref{lem:clock-payoff} hold with $X_{k_j}=0\in[-(j+1)\eta \lambda, 0]$.
\end{proof}

Now we have all the ingredients to complete the proof of the state-independent time-inhomogeneous case (Item~\ref{it:time-inhomogeneous-lb} of Theorem~\ref{thm:clock-counterexamples}).

Lemma~\ref{lem:ti-event-probability} and Lemma~\ref{lem:clock-payoff} verify the assumptions of Lemma~\ref{lem:clock-template} with $Y=f(X_n)-f_\star$, $p=1/128$, and $\ell=2$. Hence
\begin{equation*}
\bbE[f(X_n)-f_\star]
\ge
\frac{1}{128}\,\eta\log\frac1\eta.
\end{equation*}
Using the same conversion from $\eta\log(1/\eta)$ to $(\log n)/\sqrt n$ as in the state-dependent case, we get
\begin{equation*}
\bbE[f(X_n)-f_\star]
\ge
\frac{\underline c}{512}\frac{\log n}{\sqrt n}
\ge
\frac{\underline c}{2^{11}}\frac{\log n}{\sqrt n}.
\end{equation*}
This proves \eqref{eq:ti-main-lower-bound}.
Moreover, the support of $W_{k_M}$ contains the values $\pm4M$, and therefore contains values of magnitude at least $1/(8\eta)\ge \sqrt n/(8\overline c)$ by the same estimate as in the state-dependent construction.
\section{Final remarks}
The proof of Lemma~\ref{lem:frozen} uses the combination of Assumptions~\ref{ass:time-homogeneous-noise} and~\ref{ass:state-independent-noise} in a very concrete way. After conditioning on the good iterate $X_{k_{\star}}$, the future noises are still independent copies of the same law, regardless of where the process moves.
This lets us define the two one-sided dominating chains in \eqref{eq:branch-chain} and apply Proposition~\ref{prop:technical} or Corollary~\ref{cor:technical-extended}, depending on whether the corresponding side of the domain is finite or infinite.
All the argument is made possible by the Foster--Lyapunov-type inequality \eqref{eq:fl-type-ineq} that is derived from the fundamental inequality \eqref{eq:cornerstone-ineq}, which enables the construction of a suitable potential $B$ as described in Claim~\ref{claim:BC}.

Theorem~\ref{thm:clock-counterexamples} shows that both kernel symmetries matter under conditional second-moment control only. If state-independence is dropped, the current state can determine the availability of a rare large noise outcome. The kernel is still fixed in time and centered at every state, but different states can expose different rare large outcomes. If time-homogeneity is dropped, the time index can play the same role: the state-independent laws associated with specific time indices can sample the corresponding rare large outcome. In both cases, the logarithm term cannot be ruled out.

The examples use noise realizations of order $1/\eta$, and hence of order $\sqrt n$ for every standard fixed-stepsize family $\eta=\Theta(1/\sqrt n)$. Thus they do not settle the corresponding models with uniformly almost-surely bounded stochastic gradients. What they do settle is that the plain variance condition, Assumption~\ref{ass:centered_additive_l2_noise}, is not sufficient to remove the logarithm. Even strengthening it by requiring only time-homogeneity, or only state-independence, is still insufficient. The logarithm disappears in our proof only when both structural symmetries are imposed simultaneously. Finally,  an interesting open problem, is to establish if in the deterministic  multivariate setting (for each fixed dimension $d > 1$), the last iterate is optimal.

\section*{Acknowledgments}

TC gratefully acknowledges the support of the Natural Sciences and Engineering Research Council of Canada (NSERC) through grant RGPIN-2023-03688 (Discovery Grants Program).

\bibliographystyle{plain}
\bibliography{biblio}

@article{Agarwal2012,
author={Agarwal, Alekh and Bartlett, Peter L. and Ravikumar, Pradeep and Wainwright, Martin J.},
title={Information-Theoretic Lower Bounds on the Oracle Complexity of Stochastic Convex Optimization}, 
journal={IEEE Transactions on Information Theory}, 
year={2012},
volume={58},
number={5},
pages={3235-3249},
}

@book{Bach2024,
title={Learning theory from first principles},
author={Bach, Francis},
year={2024},
publisher={MIT press}
}

@inproceedings{Eldowa2024,
title={General Tail Bounds for Non-Smooth Stochastic Mirror Descent},
author={Eldowa, Khaled and Paudice, Andrea},
booktitle={Proceedings of the 27-th International Conference on Artificial Intelligence and Statistics},
pages={3205--3213},
year={2024}
}

@article{Ermoliev1969,
title={On the method of generalized stochastic gradients and quasi-{F}\'ejer sequences},
author={Ermol'ev, Yu. M.},
journal={Cybernetics},
Volume = {5},
pages={208--220},
year={1969}
}

@article{Liu2023b,
title={Stochastic Nonsmooth Convex Optimization with Heavy-Tailed Noises: High-Probability Bound, In-Expectation Rate and Initial Distance Adaptation},
author={Liu, Zijian and Zhou, Zhengyuan},
journal={arXiv preprint arXiv:2303.12277},
year={2023}
}

@inproceedings{Liu2024,
author={Liu, Zijian and Zhou, Zhengyuan},
title={Revisiting the last-iterate convergence of stochastic gradient methods},
booktitle={Proceedings of the 12-th International Conference on Learning Representations (to appear)},
year={2024}
}

@inproceedings{Harvey2019a,
title={Tight analyses for non-smooth stochastic gradient descent},
author={Harvey, Nicholas J.A. and Liaw, Christopher and Plan, Yaniv and Randhawa, Sikander},
booktitle={Proceedinds of the 32nd International Conference on Computational Learning Theory},
pages={1579--1613},
year={2019}
}

@article{Harvey2024,
author={Nicholas J. A. Harvey and Chris Liaw and Sikander Randhawa},
title={Tight analyses for subgradient descent {I:} Lower bounds},
journal={Open J. Math. Optim.},
volume={5},
pages={1--17},
year={2024}
}

@article{Jain2021,
title = {Making the Last Iterate of SGD Information Theoretically Optimal},
author = {Jain, Prateek and Nagaraj, Dheeraj M. and Netrapalli, Praneeth},
journal = {SIAM Journal on Optimization},
volume = {31},
number = {2},
pages = {1108-1130},
year = {2021},
}

@book{Nemirovski1983,
title={Problem complexity and method efficiency in optimization},
author={Nemirovskij, Arkadij Semenovi{\v{c}} and Yudin, David Borisovich},
year={1983},
publisher={Wiley-Interscience}
}

@article{Nemirovski2009,
author = {Nemirovski, A. and Juditsky, A. and Lan, G. and Shapiro, A.},
title = {Robust Stochastic Approximation Approach to Stochastic Programming},
journal = {SIAM Journal on Optimization},
volume = {19},
number = {4},
pages = {1574-1609},
year = {2009}
}

@misc{Parletta2025,
author={Parletta, Daniela Angela and Paudice, Andrea and Salzo, Saverio},
title={An Improved Analysis of the Clipped Stochastic subGradient Method under Heavy-Tailed Noise},
year={2025},
eprint={2410.00573},
archivePrefix={arXiv},
url={https://arxiv.org/abs/2410.00573}
}

@inproceedings{Shamir2013,
title={Stochastic Gradient Descent for Non-smooth Optimization: Convergence Results and Optimal Averaging Schemes},
author={Shamir, Ohad and Zhang, Tong},
booktitle={Proceedings of the 30th International Conference on Machine Learning},
year={2013}
}

@inproceedings{koren-segal2020,
  author    = {Tomer Koren and Shahar Segal},
  title     = {Open Problem: Tight Convergence of {SGD} in Constant Dimension},
  booktitle = {COLT 2020},
  series    = {Proceedings of Machine Learning Research},
  volume    = {125},
  pages     = {3847--3851},
  year      = {2020}
}

@misc{Kornowski2026,
author       = {Guy Kornowski and Ohad Shamir},
title        = {Gradient Descent's Last Iterate Is Often (Slightly) Suboptimal},
howpublished = {Preprint at arXiv:2604.13870},
month        = apr,
year         = {2026}
}

@misc{liu-lu2021,
  author       = {Daogao Liu and Zhou Lu},
  title        = {The Convergence Rate of {SGD}'s Final Iterate: Analysis on Dimension Dependence},
  note = {Preprint arXiv:2106.14588},
  year         = {2021}
}

@book{meyn2009,
  author    = {Sean P. Meyn and Richard L. Tweedie},
  title     = {Markov Chains and Stochastic Stability},
  publisher = {Cambridge University Press},
  edition   = {2},
  year      = {2009}
}

@article{zamani-glineur2025,
  author  = {Moslem Zamani and Fran{\c{c}}ois Glineur},
  title   = {Exact Convergence Rate of the Last Iterate in Subgradient Methods},
  journal = {SIAM Journal on Optimization},
  volume  = {35},
  number  = {3},
  pages   = {2182--2201},
  year    = {2025}
}

@misc{yuksel2012,
      title={Random-Time, State-Dependent Stochastic Drift for {M}arkov Chains and Application to Stochastic Stabilization Over Erasure Channels}, 
      author={Serdar Y\"uksel and Sean P. Meyn},
      year={2012},
      note= {Preprint at arXiv:1010.4820},
}

@book{kallenberg2021,
  author    = {Kallenberg, Olav},
  title     = {Foundations of Modern Probability},
  edition   = {3},
  series    = {Probability Theory and Stochastic Modelling},
  volume    = {99},
  publisher = {Springer},
  year      = {2021},
  doi       = {10.1007/978-3-030-61871-1}
}

\newpage
\appendix

\section{Proofs of the Deferred Claims}\label{sec:claim-proofs}
\subsection{Proof of Claim~\ref{claim:invariant}}
\begin{proof}
We first prove that $P$ satisfies the Feller property.
Recall that a transition probability kernel $P$ on the compact metric space $\cI$ is Feller if
\begin{equation*}
\varphi\in C(\cI)
\qquad\Longrightarrow\qquad
P\varphi\in C(\cI).
\end{equation*}
Let $\varphi\in C(\cI)$, and let $z_\ell\to z$ in $\cI$.
Since $T$ is continuous and the projection $\Pi_\cI$ is continuous, for every fixed $r\in\bbR$,
\begin{equation*}
\Phi(z_\ell,r)
=
\Pi_\cI\lrb{T(z_\ell) - \eta r}
\longrightarrow
\Pi_\cI\lrb{T(z) - \eta r}
=
\Phi(z,r).
\end{equation*}
By continuity of $\varphi$,
\begin{equation*}
\varphi\lrb{\Phi(z_\ell,r)}
\longrightarrow
\varphi\lrb{\Phi(z,r)}.
\end{equation*}
Moreover, $\labs{\varphi\lrb{\Phi(z_\ell,r)}}\le\norm{\varphi}_\infty$, and $\norm{\varphi}_\infty<\infty$ because $\cI$ is compact.
The dominated convergence theorem gives
\begin{equation*}
P\varphi(z_\ell)
=
\int_{\bbR}\varphi\lrb{\Phi(z_\ell,r)}\mu(\dif r)
\longrightarrow
\int_{\bbR}\varphi\lrb{\Phi(z,r)}\mu(\dif r)
=
P\varphi(z).
\end{equation*}
Thus $P\varphi\in C(\cI)$, and $P$ is Feller. Consequently, the chain admits an invariant probability measure by the Krylov--Bogoliubov averaging argument. For the sake of completeness, we provide the details below.
\noindent Fix $z_0\in \cI$ and define
\begin{equation*}
\nu_N
:=
\frac{1}{N}\sum_{t=0}^{N-1}\delta_{z_0}P^t,
\end{equation*}
where $\delta_{z_0}P^t$ denotes the law at time $t$ of the chain started from $z_0$.
Since $\cI$ is a compact metric space, the set of probability measures on $\cI$ is sequentially compact for weak convergence, i.e., every sequence of probability measures on $\cI$ has a weakly convergent subsequence.
Hence there exist a subsequence $(\nu_{N_j})_{j \ge 1}$ and a probability measure $\pi$ on $\cI$ such that
\begin{equation}\label{eq:weak-conv}
\nu_{N_j}\xrightarrow{\mathrm w}\pi,
\end{equation}
meaning that $\int_\cI \varphi \dif\nu_{N_j} \to \int_\cI \varphi \dif\pi$ for every $\varphi\in C(\cI)$.
\noindent Fix $\varphi\in C(\cI)$.
We know that $P$ is Feller, thus $P\varphi\in C(\cI)$, and by \eqref{eq:weak-conv} we get
\begin{equation*}
\int_\cI\varphi\dif\nu_{N_j} \longrightarrow \int_\cI \varphi \dif\pi,
\end{equation*}
and
\begin{equation*}
\int_\cI\varphi\dif(\nu_{N_j}P) = \int_\cI P\varphi \dif\nu_{N_j} \longrightarrow \int_\cI P\varphi \dif\pi = \int_\cI\varphi \dif(\pi P).
\end{equation*}
To show that these two limits coincide, note that 
\begin{equation*}
\begin{aligned}
\int_\cI \varphi \dif(\nu_NP) - \int_\cI \varphi \dif\nu_N
& = \frac{1}{N} \sum_{t=0}^{N-1} \left( P^{t+1} \varphi(z_0) - P^t \varphi(z_0) \right) 
 = \frac{P^N \varphi(z_0) - \varphi(z_0)}{N},
\end{aligned}
\end{equation*}
hence,
\begin{equation*}
\labs{ \int_\cI\varphi\dif(\nu_NP) - \int_\cI \varphi \dif\nu_N} \le \frac{2\norm{\varphi}_\infty}{N} \longrightarrow 0.
\end{equation*}
It follows that
\begin{equation*}
\int_\cI\varphi \dif(\pi P) = \int_\cI\varphi \dif\pi, \qquad \varphi\in C(\cI).
\end{equation*}
Continuous functions separate probability measures on compact metric spaces, thus $\pi P=\pi$.
\end{proof}

\subsection{Proof of Claim~\ref{claim:BC}}
\begin{proof}
Recall that $T(z) = T_{\cI, m, \eta } (z) = \sup_{ 0 \le s \le z} ( s - \eta \cdot m(s))_{+}$ for every $z \in \cI = [0, D]$, where $m: \cI \to [0,L]$ is a non-decreasing function.
For every $z \in \cI$ we set
\begin{equation*}\label{eq:B}
B(z)
\coloneq
\frac{z^2}{2 \eta} + 2 L z
\end{equation*}
and we want to show that
\begin{equation}\label{eq:fl-in-claim-proof}
\bbE[B \lrb{ \Pi_\cI \lrb{ T(z) - \eta \cdot R}}] - B(z) \le -\int_{0}^{z} m(s) \dif s + \lrb{\frac{\sigma^2}{2} + L \sigma}\cdot \eta,
\qquad z \in \cI,
\end{equation}
where the random variable $R$ satisfies $\bbE[R] = 0$ and $\bbE[R^2]\le \sigma^2$. 
Our proof of \eqref{eq:fl-in-claim-proof} will require the two following inequalities:
\begin{equation}\label{eq:B-ineq}
B (T(z)) - B(z) \le -\int_{0}^{z} m(s) \dif s \qquad \text{for every $z \in \cI$}
\end{equation}
\begin{equation}\label{eq:noise-ineq}
\bbE[ B ( \Pi_{\cI} (s - \eta \cdot R)) - B(s)]
\le
\eta \cdot \lrb{\frac{\sigma^2}{2} + L \sigma}
\qquad
\text{for every $s \in \cI$.}
\end{equation}
\noindent Assuming \eqref{eq:B-ineq} and \eqref{eq:noise-ineq}, \eqref{eq:fl-in-claim-proof} readily follows by plugging $s = T(z)$ into \eqref{eq:noise-ineq} and adding the resulting inequality to \eqref{eq:B-ineq}. It remains to prove \eqref{eq:B-ineq} and \eqref{eq:noise-ineq}.
\noindent To show \eqref{eq:noise-ineq}, let $s \in \cI$. By definition of $B$, we may rewrite the left-hand side of \eqref{eq:noise-ineq} as:
\begin{equation}\label{eq:noise-lhs-expanded}
\bbE \lsb{\frac{1}{2\eta} \cdot \lrb{(\Pi_{\cI} (s - \eta \cdot R))^2 - s^2} + 2L (\Pi_{\cI} (s - \eta \cdot R) - s ) }
\end{equation}
Note that the projection onto $[0,D]$ cannot increase the distance from $0$. In particular:
\begin{equation}\label{eq:quadratic-part}
(\Pi_{\cI} (s - \eta \cdot R))^2 - s^2 \le (s - \eta \cdot R)^2 - s^2 = \eta^2 R ^ 2 - 2 s R \eta .
\end{equation}
Also, note that $\Pi_\cI( s - \delta) \le s$ if $\delta$ is positive, whereas $\Pi_\cI( s - \delta) \le s - \delta$ if $\delta$ is non-positive, hence setting $R_{-}\coloneq \max\{ -R, 0 \}$ yields
\begin{equation}\label{eq:linear-part}
\Pi_{\cI} (s - \eta \cdot R) - s \le \eta \cdot R_{-} .
\end{equation}
Combining \eqref{eq:quadratic-part} and \eqref{eq:linear-part} with \eqref{eq:noise-lhs-expanded} gives:
\begin{align*}
\bbE[ B ( \Pi_{\cI} (s - \eta \cdot R)) - B(s)]
&\le \bbE \lsb{ \frac{1}{2\eta}(\eta^2 R ^ 2 - 2 s R \eta) + 2L \eta \cdot R_{-}} \\
&= \bbE \lsb{ \frac{1}{2} R ^ 2 + 2L R_{-}}\cdot \eta,
\end{align*}
where we used $\bbE[R]= 0$. Moreover, $\bbE[R]= 0$ entails that $\bbE[R_{-}]= \bbE[R_{+}] = \frac{1}{2} \bbE[\labs{R}]$. This gives 
\begin{equation*} 
 \bbE[ B ( \Pi_{\cI} (s - \eta \cdot R)) - B(s)] \le \lrb{\frac{1}{2} \sigma^2 + L \bbE[\labs{R}] } \cdot \eta,
\end{equation*}
and Jensen's inequality $\bbE[|R|] \le \sqrt{\bbE[R^2]} \le \sigma$ yields \eqref{eq:noise-ineq}.

\noindent To show \eqref{eq:B-ineq}, let $z \in \cI$.
By \eqref{eq:cornerstone-ineq}, we have
\begin{equation*}
\eta\int_{0}^z m(s)\dif s
\le
(z-T(z))\lrb{z+\eta L},
\end{equation*}
On the other hand, the identity $(z^2 - T(z)^2) = (z - T(z))\cdot( z + T(z))$ gives
\begin{equation*}
\eta\big(B(z)-B(T(z))\big)
=(z-T(z))\lrb{2L\eta+\frac{z+T(z)}2}.
\end{equation*}
Therefore,
\begin{align}\label{eq:almost-there}
\eta\big(B(z)-B(T(z))\big)-\eta\int_0^z m(s)\dif s
&\ge
(z-T(z))\lrb{2L\eta+\frac{z+T(z)}2-(z+\eta L)}\\
&=(z-T(z))\lrb{L\eta-\frac{z-T(z)}2}.\notag
\end{align}
Recalling that $T(z) \le z$ and noting that
\begin{equation*}
 z - T(z) = z - \sup_{0 \le s \le z} (s - \eta \cdot m(s))_{+} 
 \le z - \sup_{0 \le s \le z} (s - \eta \cdot m(s))
 \le z - \sup_{0 \le s \le z} (s - \eta \cdot L)
 = \eta \cdot L,
\end{equation*}
it follows that the right-hand side of \eqref{eq:almost-there} is non-negative, which is equivalent to \eqref{eq:B-ineq}.
\end{proof}

\subsection{Proof of Claim~\ref{claim:sandwich}}

To prove Claim~\ref{claim:sandwich}, we will use the following technical lemma.
\begin{lemma}
\label{lem:subgradient-signs}
Let $\cX\subseteq\bbR$ be a closed interval, let $f\colon\cX\to\bbR$ be convex, and let
\[
S=\{x\in\cX:f(x)\le q\}
\]
be non-empty and closed. Fix $\rho \ge 0$, and define
\[
E=\{x\in\cX:\operatorname{dist}(x,S)\le \rho\},
\qquad
\alpha=\inf E,
\qquad
\beta=\sup E,
\]
with $a=\inf\cX$ and $b=\sup\cX$.
Then the following hold.
\begin{enumerate}
\item If $\alpha>a$, then every $x\in[\alpha,\alpha+\rho)\cap\cX$ lies strictly to the left of $S$, and every relative subgradient $g\in\partial_{\cX}f(x)$ satisfies $g \le 0$.
\item If $\beta<b$, then every $x\in(\beta-\rho,\beta]\cap\cX$ lies strictly to the right of $S$, and every relative subgradient $g\in\partial_{\cX}f(x)$ satisfies $g \ge 0$.
\end{enumerate}
\end{lemma}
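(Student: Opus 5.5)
We prove item 1 and obtain item 2 by the reflection $x\mapsto -x$: replace $\cX$ by $-\cX$ and $f$ by $f(-\cdot)$. This reflection swaps $\alpha$ with $-\beta$ and $a$ with $-b$, and it flips the signs of relative subgradients. So assume $\alpha>a$. The argument has two steps: first locate $[\alpha,\alpha+\rho)\cap\cX$ to the left of $S$, then read off the sign of subgradients from convexity.

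\textbf{Step 1 (location).} Since $S$ is non-empty, closed and convex (a sublevel set of a convex function on an interval), write $s_-\coloneq\inf S$. We claim that $s_->-\infty$ and $\alpha=s_--\rho$. Suppose instead that $s_-$ were $-\infty$ or equal to $a$. Then every point of $\cX$ to the left of $S$ would be within distance $0\le\rho$ of $S$, or there would be no such points, so $E$ would contain $\inf\cX$ and hence $\alpha=a$, a contradiction. Thus $s_->a$, and $s_-\in S$ by closedness. For $x\in\cX$ with $x\le s_-$ we have $\operatorname{dist}(x,S)=s_--x$, so $E\cap\{x\le s_-\}=[\max\{a,s_--\rho\},s_-]\cap\cX$. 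The assumption $\alpha>a$ then forces $\alpha=s_--\rho>a$. Consequently, every $x\in[\alpha,\alpha+\rho)\cap\cX$ satisfies $x<s_-$, so $x\notin S$ and $x$ lies strictly to the left of every point of $S$.

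\textbf{Step 2 (sign of subgradients).} Fix such an $x$ and any $g\in\partial_\cX f(x)$, and apply the subgradient inequality at $y=s_-\in\cX$:
\[
q\ge f(s_-)\ge f(x)+g\,(s_--x).
\]
Because $x\notin S$, we have $f(x)>q$, and therefore $g\,(s_--x)<0$. Since $s_--x>0$, this gives $g<0$, and in particular $g\le 0$.

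\textbf{Main obstacle.} The only delicate step is Step 1. One must rule out degenerate configurations: $S$ unbounded to the left, $S$ touching the endpoint $a$, and possibly infinite endpoints under the paper's extended-interval convention. The whole identification rests on the formula $\alpha=\max\{a,s_--\rho\}$, so this is where care is needed. Once $\alpha>a$ pins down $\alpha=s_--\rho$, Step 2 is a one-line application of the subgradient inequality. The symmetric statement, item 2, follows by reflection as described at the start.
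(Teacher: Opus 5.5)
Your proof is correct and follows essentially the same route as the paper: from $\alpha>a$ you identify $\alpha=\inf S-\rho$, conclude that $[\alpha,\alpha+\rho)\cap\cX$ lies strictly to the left of $S$ so that $f(x)>q$ there, and read off $g<0$ from the subgradient inequality tested at a point of $S$ (you use $y=\inf S$, the paper uses an arbitrary $y\in S$). The differences are cosmetic: you justify the identification $\alpha=\max\{a,\inf S-\rho\}$, which the paper only asserts, and you obtain item~2 by reflection rather than repeating the argument. One small wording slip: when $a=-\infty$ the set $E$ does not ``contain'' $\inf\cX$, although $\inf E=a$ still holds, so the conclusion is unaffected.
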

\begin{proof}
Since $S$ is a non-empty closed convex subset of the interval $\cX$, it is a closed interval in $\cX$. Write
\[
u=\inf S,
\qquad
v=\sup S.
\]
If $\alpha>a$, then the left endpoint of the enlarged set is not created by the boundary of $\cX$, hence $\alpha=u-\rho$. Therefore every $x\in[\alpha,\alpha+\rho)\cap\cX$ satisfies $x<u$, and so $x\notin S$. Pick any $y\in S$. Then $y>x$ and $f(y)\le q<f(x)$. For any $g\in\partial_{\cX}f(x)$, the relative subgradient inequality gives
\[
f(y)\ge f(x)+g(y-x).
\]
Since $y-x>0$, this implies
\[
g\le \frac{f(y)-f(x)}{y-x}<0.
\]
In particular $g \le 0$.

The right side is symmetric. If $\beta<b$, then $\beta=v+\rho$. Hence every $x\in(\beta-\rho,\beta]\cap\cX$ satisfies $x>v$, and so $x \notin S$. Pick any $y\in S$. Then $y<x$ and $f(y)\le q<f(x)$. For $g\in\partial_{\cX}f(x)$,
\[
f(y)\ge f(x)+g(y-x).
\]
Since $y-x<0$, we obtain
\[
g \ge \frac{f(y)-f(x)}{y-x}>0.
\]
In particular $g \ge 0$.
\end{proof}

\begin{proof}[Proof of Claim~\ref{claim:sandwich}]
We prove the claim for each present branch separately.

\noindent Consider first the left branch. The base step $k = k_{\star}$ holds because $X_{k_{\star}}\in S_q\subseteq E_q$, hence $X_{k_{\star}} \ge \alpha$ and $z^{(1)}(X_{k_{\star}})=0=Z_{k_{\star}}^{(1)}$. Assume now that $z^{(1)}(X_k)\le Z_k^{(1)}$ for some $k\ge k_{\star}$. We first show that
\begin{equation}\label{eq:denoised-1}
X_k-\eta G_k\ge \alpha-T^{(1)}(Z_k^{(1)}).
\end{equation}
If $z^{(1)}(X_k)>0$, then $X_k<\alpha$. Since $G_k\le D_{+}f(X_k)$ and $D_{+}f(\alpha-z)=-D_{-}h^{(1)}(z)$, we get
\begin{align*}
X_k-\eta G_k
&\ge X_k-\eta D_{+}f(X_k)\\
&=\alpha-z^{(1)}(X_k) + \eta m^{(1)}(z^{(1)}(X_k))\\
&=\alpha-\lrb{z^{(1)}(X_k) - \eta m^{(1)}(z^{(1)}(X_k))}\\
&\ge \alpha-T^{(1)}(z^{(1)}(X_k))
\ge \alpha-T^{(1)}(Z_k^{(1)}),
\end{align*}
where the last step uses the monotonicity of $T^{(1)}$ and the induction hypothesis.
If instead $z^{(1)}(X_k)=0$, then $X_k\ge\alpha$. If $G_k \le 0$, then $X_k-\eta G_k\ge X_k\ge\alpha$. If $G_k>0$, then $X_k$ cannot belong to $[\alpha,\alpha+\eta L)\cap\cX$: by Lemma~\ref{lem:subgradient-signs}, applied with $S=S_q$ and $\rho=\eta L$, every relative subgradient of $f$ at any point of this interval is non-positive, contradicting $G_k\in\partial_{\cX}f(X_k)$ and $G_k>0$. Hence $X_k\ge\alpha+\eta L$, and therefore $X_k-\eta G_k\ge\alpha+\eta(L-G_k)\ge\alpha$. Since $T^{(1)} \ge 0$, \eqref{eq:denoised-1} follows.
Using \eqref{eq:denoised-1},
\begin{equation*}
X_k-\eta(G_k+W_k)
\ge
\alpha-\lrb{T^{(1)}(Z_k^{(1)}) + \eta W_k}.
\end{equation*}
Let $a=\inf\cX$ and recall that $D^{(1)}=\alpha-a\in(0,+\infty]$. By monotonicity of one-dimensional projections,
\begin{align*}
X_{k+1}
&=\Pi_\cX\lrb{X_k-\eta(G_k+W_k)}\\
&\ge \Pi_{[a,\alpha]}\lrb{X_k-\eta(G_k+W_k)}\\
&\ge \Pi_{[a,\alpha]}\lrb{\alpha-\lrb{T^{(1)}(Z_k^{(1)}) + \eta W_k}}\\
&=\alpha-\Pi_{[0,D^{(1)}]}\lrb{T^{(1)}(Z_k^{(1)}) + \eta W_k}\\
&=\alpha-Z_{k+1}^{(1)}.
\end{align*}
This is exactly $z^{(1)}(X_{k+1})\le Z_{k+1}^{(1)}$.

\noindent The right branch is symmetric. The base step follows from $X_{k_{\star}}\in E_q$, so $z^{(2)}(X_{k_{\star}})=0=Z_{k_{\star}}^{(2)}$. Assume $z^{(2)}(X_k)\le Z_k^{(2)}$. We first prove
\begin{equation}\label{eq:denoised-2}
X_k-\eta G_k\le \beta+T^{(2)}(Z_k^{(2)}).
\end{equation}
If $z^{(2)}(X_k)>0$, then $X_k>\beta$. Since $G_k\ge D_{-}f(X_k)$ and $D_{-}f(\beta+z)=D_{-}h^{(2)}(z)$, we have
\begin{align*}
X_k-\eta G_k
&\le X_k-\eta D_{-}f(X_k)\\
&=\beta+z^{(2)}(X_k) - \eta m^{(2)}(z^{(2)}(X_k))\\
&\le \beta+T^{(2)}(z^{(2)}(X_k))
\le \beta+T^{(2)}(Z_k^{(2)}).
\end{align*}
If $z^{(2)}(X_k)=0$, then $X_k\le\beta$. If $G_k\ge0$, then $X_k-\eta G_k\le X_k\le\beta$. If $G_k<0$, then $X_k$ cannot belong to $(\beta-\eta L,\beta]\cap\cX$: by Lemma~\ref{lem:subgradient-signs}, applied with $S=S_q$ and $\rho=\eta L$, every relative subgradient of $f$ at any point of this interval is non-negative, contradicting $G_k\in\partial_{\cX}f(X_k)$ and $G_k<0$. Hence $X_k\le\beta-\eta L$, and therefore $X_k-\eta G_k\le\beta-\eta L+\eta L=\beta$. Since $T^{(2)} \ge 0$, \eqref{eq:denoised-2} follows.

Using \eqref{eq:denoised-2},
\begin{align*}
X_{k+1}
&=\Pi_\cX\lrb{X_k-\eta(G_k+W_k)}\\
&\le \Pi_{[\beta,b]}\lrb{X_k-\eta(G_k+W_k)}\\
&\le \Pi_{[\beta,b]}\lrb{\beta+T^{(2)}(Z_k^{(2)}) - \eta W_k}\\
&=\beta+\Pi_{[0,D^{(2)}]}\lrb{T^{(2)}(Z_k^{(2)}) - \eta W_k}\\
&=\beta+Z_{k+1}^{(2)},
\end{align*}
where $b=\sup\cX$ and $D^{(2)}=b-\beta\in(0,+\infty]$. This proves $z^{(2)}(X_{k+1})\le Z_{k+1}^{(2)}$ and completes the induction.
\end{proof}

\section{Proof of Claim~\ref{claim:freezing}}
\begin{proof}
For $k=k_{\star}$ the claim is immediate. Hence fix $k>k_{\star}$ and set
$h\coloneq k-k_{\star}$.

By Assumptions~\ref{ass:additive-kernel-oracle},
\ref{ass:time-homogeneous-noise}, and~\ref{ass:state-independent-noise},
there exists a centered probability law $\mu$ on $\bbR$ with second moment at most
$\sigma^2$ such that
\[
\Pb(W_m\in A\mid\cF_m)=\mu(A)
\qquad
\forall m\ge1,\ \forall A\in\borel(\bbR).
\]
A standard induction using the tower property shows that, for every $r\ge1$,
the block
\[
(W_{k_{\star}},W_{k_{\star}+1},\ldots,W_{k_{\star}+r-1})
\]
is independent of $\cF_{k_{\star}}$ and has law $\mu^{\otimes r}$.
Equivalently, the infinite future-noise sequence
\[
U_{\infty}\coloneq (W_{k_{\star}},W_{k_{\star}+1},\ldots)
\]
is independent of $\cF_{k_{\star}}$ and has law $\mu^{\otimes\bbN}$.
In particular,
\[
U_h\coloneq (W_{k_{\star}},W_{k_{\star}+1},\ldots,W_{k-1})
\]
is independent of $\cF_{k_{\star}}$ and has law $\mu^{\otimes h}$.

Let
\[
V\coloneq (X_1,W_1,\ldots,W_{k_{\star}-1}),
\]
with the obvious convention when $k_{\star}=1$. Since
\[
\cF_{k_{\star}}
=
\sigma(X_1,W_1,\ldots,W_{k_{\star}-1}),
\]
we have $\sigma(V)=\cF_{k_{\star}}$.

For every $m\ge1$, set
\[
Y_m\coloneq (X_1,W_1,\ldots,W_{m-1}),
\qquad
E_m\coloneq \cX\times\bbR^{m-1},
\]
with the convention $E_1=\cX$ and $Y_1=X_1$.
By Definition~\ref{def:sgd}, $G_m$ is $\cF_m$-measurable, and by the definition
of the filtration,
\[
\cF_m=\sigma(Y_m).
\]
Hence $\sigma(G_m)\subseteq\sigma(Y_m)$, that is, $G_m$ is $Y_m$-measurable.
Since $\bbR$ is a Borel space, the functional representation lemma of Doob
\cite[Lemma~1.14]{kallenberg2021}, applied with $f=G_m$ and $g=Y_m$, gives a
$\borel(E_m)/\borel(\bbR)$-measurable map
$\widetilde\gamma_m:E_m\to\bbR$ such that
\[
G_m=\widetilde\gamma_m(Y_m)
\qquad\text{a.s.}
\]
Replacing $\widetilde\gamma_m$ by
\[
\gamma_m\coloneq \Pi_{[-L,L]}\circ\widetilde\gamma_m,
\]
and using $G_m\in[-L,L]$ a.s., we may assume that
$\gamma_m:E_m\to[-L,L]$ and
\[
G_m=\gamma_m(X_1,W_1,\ldots,W_{m-1})
\qquad\text{a.s.}
\]
Taking a countable intersection over $m\ge1$, these identities hold
simultaneously on an event of probability one. On the same event, the
validity assumption in Definition~\ref{def:sgd} gives
\[
\gamma_m(X_1,W_1,\ldots,W_{m-1})
\in \partial_{\cX}f(X_m)\cap[-L,L]
\qquad\text{for every }m\ge1.
\]

We now define the frozen recursion. Fix
\[
v=(x_1,w_1,\ldots,w_{k_{\star}-1})\in E_{k_{\star}}
\]
and an infinite future-noise sequence
\[
u=(u_1,u_2,\ldots)\in\bbR^{\bbN}.
\]
Define
\[
\omega_m(v,u)\coloneq
\begin{cases}
w_m, & 1\le m\le k_{\star}-1,\\
u_{m-k_{\star}+1}, & m\ge k_{\star}.
\end{cases}
\]
Set $x_1(v,u)\coloneq x_1$ and, recursively for $m\ge1$,
\[
y_m(v,u)\coloneq
\bigl(x_1,\omega_1(v,u),\ldots,\omega_{m-1}(v,u)\bigr)\in E_m,
\]
\[
g_m(v,u)\coloneq \gamma_m\bigl(y_m(v,u)\bigr),
\]
and
\[
x_{m+1}(v,u)
\coloneq
\Pi_{\cX}\Bigl(x_m(v,u)-\eta\bigl(g_m(v,u)+\omega_m(v,u)\bigr)\Bigr).
\]
The map $(v,u)\mapsto x_m(v,u)$ is Borel for every $m$.
Moreover, $x_{k_{\star}}(v,u)$ depends only on $v$, so we write it as
$x_{k_{\star}}(v)$.

Define the non-negative Borel map $\Psi_h:E_{k_{\star}}\times\bbR^h\to\bbR_+$
by
\[
\Psi_h(v,u_1,\ldots,u_h)
\coloneq
\bigl(f(x_k(v,u))-f(x_{k_{\star}}(v))\bigr)_+,
\]
where the right-hand side uses only the first $h$ future-noise coordinates.
By construction and by the simultaneous almost-sure identities above,
\[
\Psi_h(V,U_h)
=
\bigl(f(X_k)-f(X_{k_{\star}})\bigr)_+
\qquad\text{a.s.}
\]
Since $U_h$ is independent of $\cF_{k_{\star}}$, has law $\mu^{\otimes h}$,
and $V$ is $\cF_{k_{\star}}$-measurable, the standard conditional-independence
identity gives
\[
\bbE\left[
\bigl(f(X_k)-f(X_{k_{\star}})\bigr)_+
\mid \cF_{k_{\star}}
\right]
=
\left[
\int_{\bbR^h}
\Psi_h(v,u)\,\mu^{\otimes h}(\dif u)
\right]_{v=V}
\qquad\text{a.s.}
\]

It remains to bound the frozen integral for $\law(V)$-a.e. frozen past $v$.
For each $m\ge k_{\star}$, let $A_m\subseteq E_{k_{\star}}\times\bbR^{\bbN}$
be the measurable set of pairs $(v,u)$ such that
\[
g_j(v,u)\in\partial_{\cX}f(x_j(v,u))\cap[-L,L]
\qquad
\text{for every }1\le j\le m.
\]
Under the original law, $(V,U_{\infty})\in A_m$ almost surely. Since
$U_{\infty}$ is independent of $V$ and has law $\mu^{\otimes\bbN}$, this means
\[
\bigl(\law(V)\otimes\mu^{\otimes\bbN}\bigr)(A_m)=1.
\]
By Fubini's theorem, there exists a set $N_m\subseteq E_{k_{\star}}$ with
$\law(V)(N_m)=0$ such that
\[
\mu^{\otimes\bbN}\bigl(A_m(v,\cdot)\bigr)=1
\qquad
\text{for every }v\notin N_m.
\]
Set
\[
N\coloneq \bigcup_{m\ge k_{\star}}N_m.
\]
Then $\law(V)(N)=0$, and for every $v\notin N$ the frozen recursion has valid
subgradient selections at all times, $\mu^{\otimes\bbN}$-almost surely.

Fix such a $v\notin N$. Let $U=(U_1,U_2,\ldots)$ have law
$\mu^{\otimes\bbN}$, and define the shifted post-$k_{\star}$ process
\[
\bar X_i\coloneq x_{k_{\star}+i}(v,U),
\qquad
\bar G_i\coloneq g_{k_{\star}+i}(v,U),
\qquad
\bar W_i\coloneq U_{i+1},
\qquad i\ge0.
\]
Then $\bar X_0=x_{k_{\star}}(v)$ is deterministic, the noises
$(\bar W_i)_{i\ge0}$ are i.i.d.\ with law $\mu$, are centered, and have second
moment at most $\sigma^2$. Moreover, by the choice of $v\notin N$,
\[
\bar G_i\in\partial_{\cX}f(\bar X_i)\cap[-L,L]
\qquad
\text{for every }i\ge0
\]
almost surely. Thus the shifted frozen process satisfies the assumptions of
Lemma~\ref{lem:frozen}. Applying that lemma with deterministic level
\[
q(v)\coloneq f(x_{k_{\star}}(v))=f(\bar X_0)
\]
gives
\[
\int_{\bbR^h}
\Psi_h(v,u)\,\mu^{\otimes h}(\dif u)
=
\bbE\bigl[(f(\bar X_h)-q(v))_+\bigr]
\le
(L+\sigma)^2\eta.
\]
Since this holds for every $v\notin N$ and $\law(V)(N)=0$, substituting
$v=V$ in the conditional-expectation identity yields
\[
\bbE\left[
\bigl(f(X_k)-f(X_{k_{\star}})\bigr)_+
\mid \cF_{k_{\star}}
\right]
\le
(L+\sigma)^2\eta
\qquad\text{a.s.}
\]
This is exactly \eqref{eq:post-good-iterate-deterioration}.
\end{proof}

\section{On General State Space Markov Chains}\label{sec:markov}
This section recalls some standard notation and known facts pertaining to general state space Markov-chains. See, e.g., \cite{meyn2009} for a systematic treatment.

\noindent Let $\cS$ be a metric space with Borel $\sigma$-field $\borel(\cS)$. A transition probability kernel is a map $P:\cS\times\borel(\cS)\to[0,1]$ such that $P(z,\cdot)$ is a probability measure for every $z\in\cS$, and $P(\cdot,\cA)$ is measurable for every $\cA\in\borel(\cS)$. If $(Z_k)_{k \in \bbN}$ is a time-homogeneous Markov chain with kernel $P$, then
\begin{equation*}
\Pb(Z_{k+1}\in\cA\mid Z_1,\dots,Z_k)=P(Z_k,\cA).
\end{equation*}
For every bounded measurable function $\varphi:\cS\to\bbR$ and every probability measure $\lambda$ on $\cS$, it is customary to write
\begin{equation*}
P\varphi(z)=\int_\cS\varphi(y)P(z,\dif y),
\qquad
(\lambda P)(\cA)=\int_\cS P(z,\cA)\lambda(\dif z).
\end{equation*}
Thus $\lambda P$ is the distribution after one step when the initial distribution is $\lambda$, and $(\lambda P)(\varphi)=\lambda(P\varphi)$.
In this way, the notation extends in a natural way the usual notation for countable state-space Markov chains, so that the role of the transition matrix is replaced by $P$.

\noindent A standard source of transition kernels on general state spaces is a stochastic recursion driven by fresh random inputs that are independent of the current state and of the past. For example, suppose that $(R_k)_{k\geq 1}$ is an i.i.d. sequence with common law $\mu$ on a measurable space $\cR$, and that $\Phi:\cS\times\cR\to\cS$ is measurable. The recursion
\begin{equation*}
Z_{k+1}=\Phi(Z_k,R_k),\qquad k\geq 1,
\end{equation*}
defines a time-homogeneous Markov chain, with transition kernel
\begin{equation*}
P(z,\cA)
=
\Pb(\Phi(z,R)\in\cA)
\end{equation*}
where $R\sim\mu$. This construction is the general-state-space analogue of specifying the one-step transition probabilities of a countable-state chain.

\noindent Of main importance in the study of Markov chains are the invariant probability measures.
A probability measure $\pi$ on $\cS$ is invariant for $P$ if
\begin{equation*}
\pi P=\pi.
\end{equation*}
Equivalently, for every bounded measurable $\varphi:\cS\to\bbR$,
\begin{equation*}
\int_\cS P\varphi(z)\pi(\dif z)
=
\int_\cS \varphi(z)\pi(\dif z).
\end{equation*}
Thus, if $Z_1\sim\pi$, then $Z_k\sim\pi$ for every $k\geq 1$. Invariant probability measures are the general-state-space version of stationary distributions.

\noindent One common route to the existence of invariant measures uses compactness and continuity. A kernel $P$ on a compact metric space $\cS$ is called Feller if
\begin{equation*}
\varphi\in C(\cS)
\qquad\Longrightarrow\qquad
P\varphi\in C(\cS).
\end{equation*}
\noindent This is often proved via the Krylov--Bogoliubov averaging argument, which guarantees that every Feller kernel on a compact metric space admits at least one invariant probability measure. Indeed, starting from a point $z_0\in\cS$, one considers the empirical averages of the laws
\begin{equation*}
\nu_N
:=
\frac1N\sum_{k=0}^{N-1}\delta_{z_0}P^k .
\end{equation*}
and shows that a subsequence converges weakly to an invariant probability measure. This compact Feller argument is only one way to prove existence of invariant measures; the broader theory also provides recurrence and tightness criteria for noncompact spaces.

\noindent A central tool in general-state-space Markov-chain theory is the Foster--Lyapunov drift inequality. For a non-negative measurable function $V:\cS\to[0,\infty]$, define the one-step drift by $PV(z)-V(z)$
whenever the terms are well defined. A typical Foster--Lyapunov inequality has the form
\begin{equation}\label{eq:general-foster-lyapunov}
PV(z)-V(z)
\leq
-f(z)+b\mathbf 1_C(z),
\qquad z\in\cS,
\end{equation}
where $f:\cS\to[0,\infty)$ is a measurable function, $C\in\borel(\cS)$, and $b<\infty$. Here the function $V$ is called the Lyapunov function, and it is chosen so that its expected one-step variation is negative away from a controlled region, whereas the function $f$ is the quantity controlled by the drift, the set $C$ is the exceptional region where the negative drift may fail, and the constant $b$ measures the size of this local compensation.

\noindent Inequalities of the form \eqref{eq:general-foster-lyapunov} play a role in the proof of several useful properties of a chain, such as recurrence, positive recurrence, existence of invariant measures, integrability of invariant measures, moment bounds, tail bounds, and convergence in weighted norms. The common mechanism is that a negative expected drift of $V$ prevents the chain from spending too much time in regions where $f$ is large.

\noindent The simplest consequence appears after integrating a drift inequality against an invariant probability measure (see,  e.g., \cite[Theorem 2.2]{yuksel2012}). If $\pi P=\pi$ and the integrals are justified, then \eqref{eq:general-foster-lyapunov} gives
\begin{equation*}
\int_\cS f(z)\pi(\dif z)
\leq
b\,\pi(C)
\leq b.
\end{equation*}
Indeed, invariance cancels the two Lyapunov terms:
\begin{equation*}
\int_\cS PV(z)\pi(\dif z)
=
\int_\cS V(z)\pi(\dif z).
\end{equation*}
\noindent  When $V$ is bounded, as happens automatically on a compact state space if $V$ is continuous, this cancellation is immediate. In more general settings, one often obtains the same conclusion through standard truncation or comparison arguments.

\noindent  This stationary expectation bound can be used as a form of moment and tail control. Indeed, if $f$ grows at infinity, then $\pi(f)<\infty$ says that the invariant distribution has limited mass in the tails. For example, choosing $f(z)$ comparable to $|z|^p$ gives a $p$-th moment bound, while choosing $f(z)$ comparable to $\exp(\theta |z|)$ gives exponential integrability. Such integrability estimates can then be converted into tail estimates by elementary inequalities.
\end{document}